\numberwithin{equation}{section}
\newtheorem{theorem}{Theorem}[section]
\newtheorem{lemma}[theorem]{Lemma}
\newtheorem{proposition}[theorem]{Proposition}
\newtheorem{question}{Question\rm}
\newtheorem{definition}{Definition\rm}
\newtheorem{conjecture}{Conjecture\rm}
\newtheorem{Main}{Theorem}
\newcounter{paraga}[section]
\newcommand{\N}{\mathbb{N}}
\newcommand{\Z}{\mathbb{Z}}
\newcommand{\Q}{\mathbb{Q}}
\newcommand{\R}{\mathbb{R}}
\newcommand{\C}{\mathbb{C}}
\begin{document}

\def\MP{\,{<\hspace{-.5em}\cdot}\,}
\def\SP{\,{>\hspace{-.3em}\cdot}\,}
\def\PM{\,{\cdot\hspace{-.3em}<}\,}
\def\PS{\,{\cdot\hspace{-.3em}>}\,}
\def\EP{\,{=\hspace{-.2em}\cdot}\,}
\def\PP{\,{+\hspace{-.1em}\cdot}\,}
\def\PE{\,{\cdot\hspace{-.2em}=}\,}
\def\N{\mathbb N}
\def\C{\mathbb C}
\def\Q{\mathbb Q}
\def\R{\mathbb R}
\def\T{\mathbb T}
\def\A{\mathbb A}
\def\Z{\mathbb Z}
\def\demi{\frac{1}{2}}

\begin{titlepage}
  \title{\LARGE{\textbf{Optimal linearization of vector fields on the torus in non-analytic Gevrey classes}}}
  \author{Abed Bounemoura \\
    CNRS - PSL Research University\\
    (Universit{\'e} Paris-Dauphine and Observatoire de Paris)}
\end{titlepage}

\maketitle

\begin{abstract}
We study linear and non-linear small divisors problems in analytic and non-analytic regularity. We observe that the Bruno arithmetic condition, which is usually attached to non-linear analytic problems, can also be characterized as the optimal condition to solve the linear problem in some fixed non quasi-analytic class. Based on this observation, it is natural to conjecture that the optimal arithmetic condition for the linear problem is also optimal for non-linear small divisors problems in any reasonable non quasi-analytic classes. Our main result proves this conjecture in a representative non-linear problem, which is the linearization of vector fields on the torus, in the most representative non quasi-analytic class, which is the Gevrey class. The proof follows Moser's argument of approximation by analytic functions, and uses in an essential way works of Popov, R\"{u}ssmann and P\"{o}schel.
\end{abstract}

\newpage
\tableofcontents
\newpage

\section{Introduction}\label{s1}

The motivation of this work is to try to understand the discrepancy between an elementary remark and a deep theorem. Let $\T:=\R/\Z$,  $\alpha \in \T$ irrational and $(q_n)_{n \in \N}$ the sequence of denominators of the ``best" rational approximations of $\alpha$ (such approximations are given by expansion into continued fractions). Let us say that $\alpha \in \mathcal{R}$ (for R\"{u}ssmann, see~\cite{Rus75}) if $\ln(q_{n+1})=o(q_n)$ as $n$ goes to infinity and $\alpha \in \mathcal{B}$ (for Bruno, see~\cite{Bru71}) if the sequence $q_n^{-1}\ln(q_{n+1})$ is summable; obviously $\mathcal{B} \subsetneq \mathcal{R}$. The remark is that $\alpha \in \mathcal{R}$ if and only if for any real-analytic function $f:\T \rightarrow \R$ with zero average, there exist a real-analytic function $g:\T \rightarrow \R$ satisfying the linear equation
\begin{equation}\label{Yoc01}
g\circ R_\alpha-g=f
\end{equation} 
where $R_\alpha : \T \rightarrow \R$ denotes the rotation by $\alpha$. The deep theorem, due to Yoccoz (see \cite{Yoc02}), states that $\alpha \in \mathcal{B}$ if and only for any real-analytic orientation-preserving circle diffeomorphism $F : \T \rightarrow \T$ with rotation number $\alpha$ and sufficiently close to $R_\alpha$, there exists a real-analytic circle diffeomorphism $\phi : \T \rightarrow \T$ close to the identity satisfying the conjugacy equation
\begin{equation}\label{Yoc02}
 \phi^{-1} \circ F \circ \phi=R_\alpha. 
\end{equation}
The linear equation~\eqref{Yoc01} appears as a ``linearization" of the non-linear equation~\eqref{Yoc02}, hence it may be surprising that the arithmetic condition needed to solve~\eqref{Yoc02} is stronger than the one needed to solve~\eqref{Yoc01}; this reflects the failure of any kind of inverse function theorem (either the classical version or the Nash-Moser version) for this particular problem. Such a discrepancy does not appear in the smooth category; in this case~\eqref{Yoc01} and~\eqref{Yoc02} are solvable if and only if $\alpha \in \mathcal{D}$ (for Diophantine) which can be expressed by the asymptotic condition $\ln (q_{n+1})=O(\ln(q_n))$. In higher dimensions, results in the smooth case are exactly the same (and the proofs as well) but not in the analytic case: Yoccoz's proof relies on a geometric construction which deeply uses the theory of holomorphic functions in one variable and continued fractions for which there is no known good analogues in higher dimension. Yet one can still define a Bruno condition $\mathcal{B}$ in any dimension and R\"{u}ssmann (see~\cite{Rus94}) proved that it is a sufficient condition to solve the higher dimensional analogue of~\eqref{Yoc02}; it is unknown if such a condition is necessary\footnote{According to Yoccoz, it is not (personal communication)}.    

The purpose of this paper is to study these linear and non-linear small divisors problems in any dimension for regularities intermediate between smooth and analytic. Those regularities fall into two basic classes: the quasi-analytic classes, which are made of functions completely determined by their Taylor expansion at just one point exactly like analytic functions, and the others, the non quasi-analytic classes, and we will be mainly concerned with those latter classes. At that point we should stress out that even if one is interested only in analytic problems, non-quasi-analytic classes appear naturally in problems like~\eqref{Yoc01} or~\eqref{Yoc02}. As we shall see below in~\S\ref{s24}, the Bruno condition $\alpha \in \mathcal{B}$, which can be characterized by the solvability of~\eqref{Yoc02} in the analytic case, can also be characterized if one looks at~\eqref{Yoc01} not in the analytic case but in some arbitrary yet fixed non quasi-analytic class; apparently this hasn't been noticed before, even though this is elementary. Another way in which non quasi-analytic classes show up is more classical: for various reasons one may be interested in the regularity of $g=g(\alpha)$ in~\eqref{Yoc01} and $\phi=\phi(\alpha)$ in~\eqref{Yoc02} as functions of $\alpha$ (with $\alpha$ varying in a closed set, so regularity has to be understood in the sense of Whitney). In general such a dependence is not analytic even if the data is analytic, it is always smooth (see~\cite{Ris99}) and most probably, it always belongs to some non-quasi analytic class which depends on the arithmetic properties of $\alpha$; in the best case scenario (when $\alpha$ is Diophantine) such a dependence belongs to some Gevrey non-analytic class (\cite{MS03},~\cite{Pop04}), which are the most studied non quasi-analytic classes. 

Our main result deals with those Gevrey classes, so let us informally state what it implies in the particular case of~\eqref{Yoc01} and~\eqref{Yoc02}. Let $f_n \in \C$, $n \in \Z$, be the Fourier coefficients of an integrable function on $\T$; it is well-known that $f$ is analytic if and only if $|f_n|\leq e^{-s|n|}$ for some $s>0$ and for all $n$ large enough; we shall say it belongs to the $a$-Gevrey class, $0<a\leq 1$, if we have instead the asymptotic inequalities $|f_n|\leq e^{-s|n|^a}$. So $1$-Gevrey is analytic, but $a$-Gevrey functions for $0<a<1$ are non quasi-analytic (indeed, for $0<a<1$ it is not hard to construct explicitly a non-zero $a$-Gevrey function with an arbitrarily small support). It is easy to observe that~\eqref{Yoc01} can be solved if and only if $\ln(q_{n+1})=o(q_n^a)$, while a consequence of the results in~\cite{BF19} and~\cite{LDG19} is that~\eqref{Yoc02} can be solved if $q_n^{-a}\ln(q_{n+1})$ is summable. The latter condition is an adapted Bruno type condition, which is optimal in the analytic case $a=1$ in view of the work of Yoccoz, and one may ask if this is the case when $0<a<1$. The answer is negative, as the following informal theorem shows.

\begin{theorem}
In the $a$-Gevrey class for $0<a<1$, we have $\ln(q_{n+1})=o(q_n^a)$ if and only if\eqref{Yoc01} can be solved if and only if\eqref{Yoc02} can be solved.
\end{theorem}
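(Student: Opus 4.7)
The theorem asserts that three conditions are equivalent: (i) $\ln(q_{n+1})=o(q_n^a)$; (ii) solvability of~\eqref{Yoc01} for every zero-mean $a$-Gevrey $f$; and (iii) solvability of~\eqref{Yoc02} for every $a$-Gevrey $F$ with rotation number $\alpha$ sufficiently close to $R_\alpha$. The equivalence (i)$\iff$(ii) is the elementary Fourier observation already indicated in the excerpt: the mean-zero formal solution of~\eqref{Yoc01} has Fourier coefficients $f_n/(e^{2i\pi n\alpha}-1)$, which decay at an $a$-Gevrey rate if and only if $|e^{2i\pi n\alpha}-1|^{-1}$ grows no faster than $e^{\varepsilon|n|^a}$ for every $\varepsilon>0$, and a standard continued-fraction computation rephrases this as $\ln(q_{n+1})=o(q_n^a)$. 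The implication (iii)$\Rightarrow$(ii) follows by the classical obstruction argument of differentiating the conjugacy equation along a one-parameter family of small $a$-Gevrey perturbations of $R_\alpha$. Only the sufficiency (i)$\Rightarrow$(iii) requires real work.

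To prove (i)$\Rightarrow$(iii), I would implement Moser's scheme of approximation by real-analytic functions, precisely as the abstract announces. Given an $a$-Gevrey $F$ close to $R_\alpha$, the first step is to approximate $F$ by a sequence of real-analytic diffeomorphisms $F^{(j)}$ on complex strips of decreasing widths $s_j \to 0$, with $\|F-F^{(j)}\|$ tending to zero at a rate dictated by the $a$-Gevrey weight. On each strip I would apply a quantitative analytic linearization theorem of the R\"{u}ssmann--P\"{o}schel--Popov type to produce an analytic conjugacy $\phi^{(j)}$ on a slightly narrower strip, whose size and loss of analyticity are controlled by the analytic Bruno-type function of $\alpha$ truncated at scale $s_j^{-1}$. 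The final conjugacy $\phi$ would then be built as the telescoping sum $\phi = \phi^{(0)} + \sum_{j\geq 0}(\phi^{(j+1)}-\phi^{(j)})$, with the remaining task being to verify that these corrections sum to an $a$-Gevrey diffeomorphism.

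The main technical difficulty is to calibrate three competing scales throughout this iteration: the rate $s_j \to 0$, the loss of analyticity at the $j$-th step (governed by a truncated Bruno-type sum $\sum_{q_n\leq s_j^{-1}} q_n^{-1}\ln(q_{n+1})$), and the $a$-Gevrey target. The decisive point, and the reason why $\ln(q_{n+1})=o(q_n^a)$ suffices in place of the stronger Bruno summability $\sum q_n^{-a}\ln(q_{n+1})<\infty$ used in~\cite{BF19,LDG19}, is that the little-$o$ hypothesis makes the above truncated sum at scale $s_j^{-1}$ of size $o(s_j^{-a})$, which is exactly the loss permitted by the $a$-Gevrey class. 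In effect, the Moser scheme transports the optimal linear equivalence (i)$\iff$(ii) to the nonlinear setting, and for this to work one must have the analytic linearization theorem in an essentially sharp quantitative form, which is precisely what the cited works of R\"{u}ssmann, P\"{o}schel, and Popov provide.
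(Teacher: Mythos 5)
Your overall architecture for the hard direction (i)$\Rightarrow$(iii) — analytic approximations on shrinking strips, a quantitative analytic linearization theorem at each scale, telescoping, then checking the limit is Gevrey — is indeed the paper's strategy (Propositions~\ref{popov}, \ref{KAM2}, \ref{popov2} feeding into Theorem~\ref{th1}, of which the statement is a corollary via Theorem~\ref{th3} and suspension). But your calibration of the competing scales is wrong, and this is exactly the point where the paper has to work. You match a per-step loss of size $o(s_j^{-a})$ against an approximation/characterization rate with exponent $a$, i.e.\ errors like $\exp(-r s_j^{-a})$. Two problems. First, that is the rate obtained by entire functions of exponential type (convolution/truncation at frequency $\sim s_j^{-1}$), and it does \emph{not} characterize the $a$-Gevrey class: if the corrections $\phi^{(j+1)}-\phi^{(j)}$ are only controlled by $\exp(-c\,s_j^{-a})$ on width $s_j$, the limit is shown to be Gevrey of a strictly worse exponent (roughly $a/(1+a)$), so your scheme does not close in the class claimed by the theorem. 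The correct scale, which both characterizes $\mathcal{F}^a_r$ (Popov's almost-analytic extensions, Propositions~\ref{dynkin}--\ref{popov2}) and governs the KAM threshold for $\omega\in\mathcal{A}^a_\tau$ (via the Young conjugate, Proposition~\ref{KAM2}), is the stretched exponential $\exp(-\mathrm{const}\cdot s_j^{-b})$ with $b=a/(1-a)$; the competition is then between the constants ($r$ versus $c(a,\kappa)\tau$), not between exponents. Second, your ``decisive point'' is miscast: under $\ln q_{n+1}=o(q_n^a)$ with $a<1$ the full Bruno series $\sum q_n^{-1}\ln q_{n+1}$ already converges (the terms are $o(q_n^{a-1})$ and $q_n$ grows at least geometrically), so the truncated Bruno sum is $O(1)$, and in any case it is not the quantity that controls the threshold here; what matters is the Gevrey-adapted bound $|k\cdot\omega|^{-1}\leq \gamma^{-1}e^{\tau\varphi_a(|k|)}$, whose Legendre transform produces the exponent-$b$ threshold with the factor $\delta(a,\kappa)$, singular as $a\to1$ — which is precisely why no Bruno-type condition survives for $a<1$ and why the analytic case is excluded.

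There is also a gap in your (iii)$\Rightarrow$(ii): ``differentiating the conjugacy equation along a one-parameter family'' is exactly the kind of linearization heuristic whose failure is the theme of this problem; you have no regularity (or even continuity) of $\phi_t$ in $t$, and keeping the rotation number equal to $\alpha$ along the family is itself an implicit constraint, so this is not a proof as stated. The paper avoids it: for the continuous problem~\eqref{V}, the reparametrized fields $X=(1/f)X_\omega$ of~\eqref{rep1} have, by the Kolmogorov--Herman identity~\eqref{rep2}, a unique normalized solution given explicitly by the solution $g$ of the linear equation~\eqref{L}; hence non-solvability of the linear problem (the counterexamples of Lemma~\ref{lem3}) immediately yields a Gevrey vector field that cannot be Gevrey-linearized, and the discrete statement for~\eqref{Yoc01}--\eqref{Yoc02} is then recovered by the suspension--section argument. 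Your (i)$\Leftrightarrow$(ii) step is fine and agrees with the paper's Lemma~\ref{lem3}.
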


This is a particular case of Theorem~\ref{th3} that will be stated in~\S\ref{s22}, and is a corollary of our main result Theorem~\ref{th1} which is a more quantitative statement, valid in any dimension and in the continuous setting (the discrete setting can be recovered by the usual suspension-section argument, which is rather straightforward in our context). So for Gevrey non-analytic classes, there are no Bruno type condition; actually this should be true for any reasonable non quasi-analytic class (where by reasonable we mean that such a class should be stable with respect to basic non-linear operations, which, unlike~\eqref{Yoc01}, seems needed if one wants to study~\eqref{Yoc02}). For quasi-analytic classes, the problem is more subtle.

\section{The linear problem}\label{s2}

\subsection{The cohomological equation}\label{s21}

The continuous version of equation~\eqref{Yoc01} in any dimension $n \geq 2$ is as follows. Let $\omega \in \R^n$ a non-resonant vector, meaning that for all non-zero $k\in \Z^n$, the Euclidean inner product $k\cdot \omega$ is non-zero. We denote by $\mathcal{F}$ the space of real formal Fourier series, whose elements are of the form $f=\sum_{k \in \Z^n}f_ke_k$ with $f_k \in \C$ satisfying $\overline{f_k}=f_{-k}$, and where $e_k(x)=e^{2\pi ik\cdot x}$. The problem is to find $g=\sum_{k \in \Z^n}g_ke_k \in \mathcal{F}$ and $c \in \R$ such that
\begin{equation}\label{L}\tag{L}
 X_\omega g=f-c
\end{equation} 
where $X_\omega=\omega$ is the constant vector field on $\T^n$, acting formally on $\mathcal{F}$ by derivation in the direction $\omega$. The equation~\eqref{L}, which is usually called the cohomological equation, is easily solved because it is a linear equation: both $c=c(f)$ and $g=g(f)$ depends linearly on $f$. Necessarily
\[ c=f_0=\int_{\T^n}fdx \]
where $dx$ is the Haar measure on $\T^n$, and solutions of~\eqref{L} are such that $g_0 \in \R$ is arbitrary while for non-zero $k\in\Z^n$:
\begin{equation}\label{Fourier}
g_k=(i2\pi k\cdot \omega)^{-1}f_k.
\end{equation} 
The solution $g$ is thus unique if we normalize the value of $g_0$: we shall always choose $g_0=0$ and refers to $g$ defined this way as the solution of~\eqref{L}. Now if $f$ is regular, in order for $g$ to be regular there will be a competition between the decay of $|f_k|$ and the inevitable growth of $|k\cdot \omega|$ as the norm of $k$ goes to infinity. We shall quantify this by introducing a weight as follows.

\begin{definition}
A weight $\varphi : [0,+\infty) \rightarrow [0,+\infty)$ is a continuous function, normalized by $\varphi(0)=0$, which is non-decreasing and satisfies $\ln (1+t)=O(\varphi(t))$.
\end{definition}

One should keep in mind the following three main examples of weights:
\begin{itemize}
\setlength\itemsep{0em}
\item the ``smooth" weight $\varphi_0(t):=\ln (1+t)$;
\item the ``Gevrey" weight $\varphi_a(t):=t^a/a$, for $0<a<1$;
\item the ``analytic" weight $\varphi_1(t):=t$.
\end{itemize}

We have $\varphi_a(t) \rightarrow \varphi_1(t)$ as $a \rightarrow 1$, whereas for $t \geq 1$, $\varphi_a(t)-1/a=(t^a-1)/a \rightarrow \ln(t)$ as $a \rightarrow 0$, which is equivalent to $\varphi_0(t)$. As we shall see below, our minimal growth requirement on $\varphi$, which can be written again as $\varphi_0(t)=O(\varphi(t))$, is no loss of generality for the problems we are interested in. To a given weight function we will associate a scale of regularity classes and a scale of arithmetic classes, where we shall always use the norm
\[ |x|:=\max_{1\leq i \leq n}|x_i|, \quad x=(x_1,\dots,x_n) \in \R^n. \]

\begin{definition}
Given a weight $\varphi$ and a regularity parameter $r \in \R$, we define a regularity class
\[ \mathcal{F}_r^\varphi:=\{f \in \mathcal{F} \; | \; ||f||_r:=\sup_{k \in \Z^n}|f_k|e^{r\varphi(|k|)}<+\infty \}. \]
\end{definition}

Observe that $\mathcal{F}_r^\varphi$ is always a Banach space, and we have compact inclusions $\mathcal{F}_{r_2}^\varphi \subseteq \mathcal{F}_{r_1}^\varphi$ whenever $r_1\leq r_2$. Let us make some technical comment on the choice of the ``Fourier based" $l_{\infty}$-norm $||\cdot||_r$ we used to define the space $\mathcal{F}_r^\varphi$; we could have easily chosen any $l_{p}$-norm for $p\in [1,+\infty]$:
\[ ||f||_r^p:=\left(\sum_{k \in \Z^n}|f_k|^p e^{pr\varphi(|k|)}\right)^{1/p}. \]
Those norms are non-equivalent and lead to different spaces, yet they are ``comparable" so the (projective and inductive) limits we will consider below are the same. For the linear problem that we shall consider here, the precise choice of such a norm do not make any difference, but this is not the case for non-linear problems. The $l_{1}$-norm has the advantage that it defines a Banach algebra provided the weight $\varphi$ is sub-additive (that is $\varphi(t+s)\leq \varphi(t)+\varphi(s)$) and thus leads to very simple product estimates, while the $l_{2}$-norm has the advantage that it defines a Hilbert space, and the norm is equivalent to a ``space based" $L_2$-norm which are characterized by the growth of the derivatives, provided $f$ is a smooth function. We chose the $l_\infty$-norm since it will be more convenient for the (almost) characterization of such spaces (under assumptions on the weight $\varphi$) by almost analytic extension and analytic approximation (see~\S\ref{s42}). 

For the smooth weight $\varphi_0(t)=\ln(1+t)$, we shall write $\mathcal{F}_r^{\varphi_0}=\mathcal{F}_r^{0}$ and this space can be compared to the space $C^r(\T^n)$ of H\"{o}lder $1$-periodic functions on $\R^n$ as follows
\[ \mathcal{F}_{r+d+\epsilon}^{0}, \subseteq C^r(\T^n) \subset \mathcal{F}_r^{0} \]
for any $\epsilon>0$, hence $\mathcal{F}_r^{0}$ contains sufficiently smooth functions when $r$ is sufficiently large, and so the same remains true in general for $\mathcal{F}_r^{\varphi}$ according to our assumption $\varphi_0(t)=O(\varphi(t))$. 

\begin{definition}
Given a weight $\varphi$ and an arithmetic parameter $\tau>0$, we define an arithmetic class
\[ \mathcal{A}_\tau^\varphi:=\{\omega \in \R^n \; | \; \gamma_\tau^{-1}(\omega):=\sup_{k \in \Z^n\setminus\{0\}}|2\pi k\cdot \omega|^{-1}e^{-\tau\varphi(|k|)}<+\infty \} . \]
\end{definition}

Observe that $\mathcal{A}_\tau^\varphi$ might be the empty set; however the set $\mathcal{A}_\tau^{\varphi_0}=\mathcal{A}_\tau^{0}$ is precisely the set of Diophantine vectors with exponent $\tau$, which is non-empty for $\tau \geq n-1$ and of full Lebesgue measure for $\tau>n-1$. Hence our assumption $\varphi_0(t)=O(\varphi(t))$ ensure that $\mathcal{A}_\tau^\varphi$ is non-empty and of full measure for all $\tau$ large enough, and this explains why our minimal growth assumption is no loss of generality. The following lemma is obvious.

\begin{lemma}\label{lem1}
For any $\omega \in \mathcal{A}_\tau^\varphi \neq \emptyset$ and any $r \in \R$, we have
\[ ||g||_{r-\tau}\leq \gamma_\tau^{-1}(\omega)||f||_r \]
hence $g \in \mathcal{F}_{r-\tau}^\varphi$ provided $f \in \mathcal{F}_{r}^\varphi$.
\end{lemma}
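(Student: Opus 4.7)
The plan is essentially a one-line computation: combine the explicit Fourier formula~\eqref{Fourier} for the solution $g$ of~\eqref{L} with the defining bound for $\omega \in \mathcal{A}_\tau^\varphi$, then read off the norm estimate from the definition of $\mathcal{F}_r^\varphi$.

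First I would fix $f \in \mathcal{F}_r^\varphi$ and let $g$ be the normalized solution of~\eqref{L}, so that $g_0=0$ and $g_k = (i2\pi k\cdot\omega)^{-1} f_k$ for every non-zero $k \in \Z^n$. From the definition of $\mathcal{A}_\tau^\varphi$, the supremum $\gamma_\tau^{-1}(\omega)$ is finite, which means that for every non-zero $k$ one has the pointwise lower bound
\[ |2\pi k\cdot \omega|^{-1} \leq \gamma_\tau^{-1}(\omega)\, e^{\tau\varphi(|k|)}. \]
Substituting this into the Fourier formula gives $|g_k| \leq \gamma_\tau^{-1}(\omega)\, e^{\tau\varphi(|k|)} |f_k|$ for $k \neq 0$, while for $k=0$ the inequality $|g_0| \leq \gamma_\tau^{-1}(\omega)\, e^{\tau\varphi(0)} |f_0|$ holds trivially since $g_0=0$.

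Next I would multiply by $e^{(r-\tau)\varphi(|k|)}$ and take the supremum over $k \in \Z^n$: the exponents telescope, leaving $|g_k|e^{(r-\tau)\varphi(|k|)} \leq \gamma_\tau^{-1}(\omega)\, |f_k|e^{r\varphi(|k|)} \leq \gamma_\tau^{-1}(\omega) \,\|f\|_r$, and taking the supremum in $k$ yields precisely $\|g\|_{r-\tau}\leq \gamma_\tau^{-1}(\omega)\|f\|_r$. The conclusion $g \in \mathcal{F}_{r-\tau}^\varphi$ then follows directly from the definition of this Banach space.

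There is no real obstacle here: the lemma is essentially a repackaging of the definitions, and the loss of $\tau$ units of regularity is exactly the exponent appearing in the arithmetic class. The only minor point to notice is that the normalization $g_0=0$ is what allows the bound to be stated uniformly over $k$, since the Fourier formula~\eqref{Fourier} itself says nothing about the mean of $g$.
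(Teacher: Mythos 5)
Your proof is correct and is exactly the direct verification the paper has in mind (the paper simply declares the lemma obvious and gives no proof): combine the Fourier formula~\eqref{Fourier} with the defining bound $|2\pi k\cdot\omega|^{-1}\leq \gamma_\tau^{-1}(\omega)e^{\tau\varphi(|k|)}$ and take the supremum. The remark about the normalization $g_0=0$ handling the $k=0$ term is the right small detail to note; nothing is missing.
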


As a side remark, the above lemma holds true (up to a multiplicative constant which depends on the dimension) with $\mathcal{F}_{r}^{0}$ replaced by the space $C^r(\T^n)$, $r>0$ and $r\notin \N$ (this follows from a Paley-Littlewood decomposition, and holds true actually for general Besov spaces allowing $r \in \R$).

\subsection{The projective limit}\label{s22}

Next we look at the projective limit of the scale of Banach spaces $(\mathcal{F}_r^\varphi)_{r>0}$ and its associated arithmetic class
\[  \mathcal{F}_{\infty}^{\varphi} :=\bigcap_{r>0}\mathcal{F}_r^\varphi, \quad \mathcal{A}_{\infty}^{\varphi} :=\bigcup_{\tau>0}\mathcal{A}_\tau^\varphi  \]
which are also characterized by
\[ f \in \mathcal{F}_{\infty}^{\varphi} \Longleftrightarrow \varphi(|k|)=o(\ln (|f_k|^{-1}), \quad \omega \in \mathcal{A}_{\infty}^{\varphi} \Longleftrightarrow\ln (|2\pi k\cdot\omega|^{-1})=O(\varphi(|k|)). \]
Endowed with its natural projective limit topology, $\mathcal{F}_{\infty}^{\varphi}$ is a Fr\'{e}chet space. We shall say that a weight $\varphi$ dominates another weight $\psi$ if
\[ \psi(t)=O(\varphi(t)) \Longleftrightarrow \psi \preceq \varphi  \]
and we obviously have
\[ \psi \preceq \varphi \Longrightarrow \mathcal{F}_{\infty}^{\varphi} \subseteq \mathcal{F}_{\infty}^{\psi}, \quad \mathcal{A}_{\infty}^{\psi} \subseteq \mathcal{A}_{\infty}^{\varphi}. \]
The converse holds true under some restrictions (for sub-additive weights for instance). By definition, an arbitrary weight dominates the smooth weight $\varphi_0$ and since
\[ \mathcal{F}_{\infty}^{0}=C^\infty(\T^n), \quad \mathcal{A}_{\infty}^{0}=\mathcal{D} \] 
where $\mathcal{D}$ is the set of all Diophantine vectors, for an arbitrary weight $\varphi$ we have 
\[ \mathcal{F}_{\infty}^{\varphi} \subseteq C^\infty(\T^n), \quad \mathcal{D} \subseteq \mathcal{A}_{\infty}^{\varphi}.  \]
so $\mathcal{A}_{\infty}^{\varphi}$ is always of full measure. If $\psi \preceq \varphi$ and $\varphi \preceq \psi$ the weights are said to be equivalent and they define, in the above limits, the same regularity and arithmetic classes. Finally, for the analytic weight $\varphi_1(t)=t$, $\mathcal{F}_{\infty}^{\varphi_1}=\mathcal{F}_{\infty}^{1}$ identifies with entire periodic functions. The following lemma is well-known.

\begin{lemma}\label{lem2}
The vector $\omega \in \mathcal{A}_{\infty}^\varphi$ if and only the equation~\eqref{L} can be solved in $\mathcal{F}_{\infty}^{\varphi}$. More precisely, if $\omega \in \mathcal{A}_{\infty}^\varphi$, then for any $f \in \mathcal{F}_{\infty}^{\varphi}$ we have $g \in \mathcal{F}_{\infty}^{\varphi}$ and if $\omega \notin \mathcal{A}_{\infty}^\varphi$, there exists $f \in \mathcal{F}_{\infty}^{\varphi}$ such that $g$ does not belong to  
\[ \mathcal{F}_{-\infty}^\varphi:=\bigcup_{r<0}\mathcal{F}_r^\varphi. \]
\end{lemma}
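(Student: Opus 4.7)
The forward direction is immediate from Lemma~\ref{lem1}. Indeed, if $\omega \in \mathcal{A}_\infty^\varphi$ then $\omega \in \mathcal{A}_\tau^\varphi$ for some $\tau>0$, so for any $f \in \mathcal{F}_\infty^\varphi$ and any $r \in \R$, applying Lemma~\ref{lem1} with parameter $r+\tau$ gives $\|g\|_r \leq \gamma_\tau^{-1}(\omega)\|f\|_{r+\tau}<+\infty$, so $g \in \mathcal{F}_\infty^\varphi$.

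For the converse, the plan is to construct, by negating the arithmetic condition, a lacunary Fourier series realizing $f$. The hypothesis $\omega \notin \mathcal{A}_\infty^\varphi$ means that $\ln(|2\pi k\cdot\omega|^{-1})$ is not $O(\varphi(|k|))$; equivalently, for every integer $j \geq 1$ there exists $k \in \Z^n\setminus\{0\}$ with $|2\pi k\cdot\omega|^{-1} \geq e^{j\varphi(|k|)}$. Since $\omega$ is non-resonant, the left-hand side is finite for each fixed $k$, so after passing to a subsequence we may select pairwise distinct vectors $k_j \in \Z^n\setminus\{0\}$, lying in some fixed open half-space (to rule out $k_j=-k_l$), with $|k_j|\to\infty$ and
\[ |2\pi k_j\cdot\omega|^{-1} \geq e^{j\varphi(|k_j|)}. \]
Note that the assumption $\ln(1+t)=O(\varphi(t))$ forces $\varphi(|k_j|)\to\infty$.

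Define $f \in \mathcal{F}$ by setting $f_{k_j}:=f_{-k_j}:=e^{-\demi j\varphi(|k_j|)} \in \R$ and $f_k:=0$ otherwise; this is consistent with the reality condition by the choice of half-space. For any $r>0$, once $j \geq 2r$ we have $|f_{k_j}|e^{r\varphi(|k_j|)}=e^{(r-j/2)\varphi(|k_j|)}\leq 1$, so $\|f\|_r<+\infty$ and hence $f \in \mathcal{F}_\infty^\varphi$. The corresponding solution $g$ of~\eqref{L} satisfies, by~\eqref{Fourier},
\[ |g_{k_j}| = |2\pi k_j\cdot\omega|^{-1}|f_{k_j}| \geq e^{j\varphi(|k_j|)}e^{-\demi j\varphi(|k_j|)} = e^{\demi j\varphi(|k_j|)}. \]
Thus for any $r<0$, as soon as $j>-2r$ we have $|g_{k_j}|e^{r\varphi(|k_j|)}\geq e^{(j/2+r)\varphi(|k_j|)}\to+\infty$, so $\|g\|_r=+\infty$, proving $g \notin \mathcal{F}_{-\infty}^\varphi$.

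The only non-routine point is the extraction of the sequence $(k_j)$: one must ensure the $k_j$ are pairwise distinct (ruled in by non-resonance) and compatible with the reality condition (handled by restricting to a half-space); the rest is a straightforward tuning of exponents so that the decay beats any $r>0$ while the amplification by the small divisor dominates any $r<0$.
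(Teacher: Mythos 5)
Your proof is correct and follows essentially the same route as the paper: the forward direction via Lemma~\ref{lem1}, and the converse via a lacunary Fourier series supported on modes $k_j$ with $|2\pi k_j\cdot\omega|^{-1}\geq e^{\tau_j\varphi(|k_j|)}$, splitting this smallness between the decay of $f_{k_j}$ and the growth of $g_{k_j}$. The only difference is cosmetic: the paper takes $f_{k_j}=(i2\pi k_j\cdot\omega)|2\pi k_j\cdot\omega|^{-1/2}$ (which handles the reality condition automatically once $\pm k_j$ are both included), whereas you impose real symmetric coefficients and a half-space restriction — for which the natural airtight choice is the half-space $\{k\cdot\omega>0\}$, admissible by non-resonance.
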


So the statement says that either we can always solve the equation~\eqref{L} in the same regularity class or else the solution may loose all regularity (observe that $\mathcal{F}_{-\infty}^{0}=\mathcal{F}_{-\infty}^{\varphi_0}$ is precisely the space of periodic distributions). When $\omega \in \mathcal{A}_{\infty}^\varphi$, the assertion follows directly from Lemma~\ref{lem1}. When $\omega \notin \mathcal{A}_{\infty}^\varphi$, we have  
\[ \limsup_{k \in \Z^n\setminus\{0\}} \frac{\ln (|2\pi k\cdot\omega|^{-1}|}{\varphi(|k|)}=+\infty \]
hence for any positive sequence $\tau_j \rightarrow +\infty$ one can find a sequence of integer vectors $k_j \in \Z^n \setminus \{0\}$ with $|k_j| \rightarrow +\infty$ such that $|2\pi k_j \cdot \omega|^{-1} \geq e^{\tau_j\varphi(|k_j|)}$. If we choose
\[  f=\sum_{j} f_{k_j} e_{k_j}, \quad f_{k_j}:=(i 2\pi k_j\cdot \omega)|2\pi k_j\cdot \omega|^{-1/2} \]
then $f \in \mathcal{F}$ and the infinitely many non-zero Fourier coefficients of $f$ and $g$ satisfy
\[ |f_{k_j}|=|2\pi k_j\cdot \omega|^{1/2}  \leq  e^{-(\tau_j/2)\varphi(|k_j|)}, \quad g_{k_j}=|2\pi k_j\cdot \omega|^{-1/2}  \geq e^{(\tau_j/2)\varphi(|k_j|)}  \]
and the conclusion follows.

\subsection{The inductive limit}\label{s23}

Now we look at the inductive limit and its associated arithmetic class
\[  \mathcal{F}_+^{\varphi} :=\bigcup_{r>0} \mathcal{F}_r^\varphi, \quad \mathcal{A}_+^{\varphi} :=\bigcap_{\tau>0} \mathcal{A}_\tau^\varphi  \]
which admit the ``dual" characterizations
\[ f \in \mathcal{F}_{+}^{\varphi} \Longleftrightarrow \varphi(|k|)=O(\ln (|f_k|^{-1}), \quad \omega \in \mathcal{A}_{+}^{\varphi} \Longleftrightarrow\ln (|2\pi k\cdot\omega|^{-1})=o(\varphi(|k|)) \]
and which are again well-defined up to equivalent weights. The inductive limit topology on $\mathcal{F}_{+}^{\varphi}$ is more complicated, but it is still a complete locally convex topological vector space (yet not metrizable). We obviously have the strict inclusions
\[ \mathcal{F}_{\infty}^{\varphi} \subsetneq  \mathcal{F}_+^\varphi, \quad  \mathcal{A}_+^\varphi \subsetneq \mathcal{A}_{\infty}^{\varphi}.  \]
Now we shall say that a weight $\varphi$ strictly dominates another weight $\psi$ if 
\[ \psi(t)=o(\varphi(t)) \Longleftrightarrow \psi \prec \varphi  \]
and we have
\[ \psi \prec \varphi \Longrightarrow \mathcal{F}_{+}^{\varphi} \subset \mathcal{F}_{\infty}^{\psi}, \quad \mathcal{A}_{\infty}^{\psi} \subset \mathcal{A}_{+}^{\varphi}. \]
In particular, if $\varphi_0(t)=o(\varphi(t))$, then $\mathcal{A}_{+}^{\varphi}$ always contains the set of Diophantine vectors, and therefore it is of full measure (but in full generality it might be empty, as it is the case for $\varphi=\varphi_0$). For the analytic weight $\varphi_1(t)=t$, $\mathcal{F}_+^{\varphi_1}=\mathcal{F}_+^{1}$ is precisely the space $\mathcal{O}(\T^n)$ of real-analytic functions on $\T^n$ and the condition $\mathcal{A}_+^{\varphi_1}=\mathcal{A}_+^1$ is exactly R\"{u}ssmann condition $\mathcal{R}$ as it was introduced in~\cite{Rus75}. For the Gevrey weight $\varphi_a(t)=t^a/a$, $\mathcal{F}_+^{\varphi_a}=\mathcal{F}_+^{a}$ is the so-called space of Gevrey functions.

\begin{lemma}\label{lem3}
The vector $\omega \in \mathcal{A}_+^\varphi \neq \emptyset$ if and only the equation~\eqref{L} can be solved in $\mathcal{F}_+^{\varphi}$. More precisely, if $\omega \in \mathcal{A}_+^\varphi$, then for any $f \in \mathcal{F}_{+}^{\varphi}$ we have $g \in \mathcal{F}_{+}^{\varphi}$ and if $\omega \notin \mathcal{A}_+^\varphi$, there exists $f \in \mathcal{F}_{+}^{\varphi}$ such that $g$ does not belong to  
\[ \mathcal{F}_{-}^\varphi:=\bigcap_{r<0}\mathcal{F}_r^\varphi. \] 
\end{lemma}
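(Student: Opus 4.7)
The plan is to follow the same pattern as the proof of Lemma~\ref{lem2} given above, adjusting the quantitative estimates so as to fit the inductive/intersection framework. The ``if'' direction is immediate from Lemma~\ref{lem1}: suppose $f \in \mathcal{F}_+^\varphi$, so that $f \in \mathcal{F}_r^\varphi$ for some $r>0$; since $\omega \in \mathcal{A}_+^\varphi \subseteq \mathcal{A}_{r/2}^\varphi$, Lemma~\ref{lem1} applied with $\tau=r/2$ yields $g \in \mathcal{F}_{r/2}^\varphi \subseteq \mathcal{F}_+^\varphi$ with the explicit bound $\|g\|_{r/2}\leq \gamma_{r/2}^{-1}(\omega)\|f\|_r$.

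For the converse I would start from the negation of the dual characterization: if $\omega \notin \mathcal{A}_+^\varphi$, then $\ln(|2\pi k\cdot\omega|^{-1}) \neq o(\varphi(|k|))$, so there exist a \emph{fixed} $\epsilon_0>0$ and a sequence of nonzero vectors $k_j \in \Z^n$ with $|k_j|\to+\infty$ such that
\[ |2\pi k_j\cdot \omega|^{-1} \geq e^{\epsilon_0 \varphi(|k_j|)}. \]
Passing to a further subsequence I may assume the $k_j$ are pairwise distinct and that $k_j\neq -k_{j'}$ for $j\neq j'$, so that the formal series below is unambiguously defined. This is the exact analogue of the first step of the proof of Lemma~\ref{lem2}, but here $\epsilon_0$ is a single positive constant rather than a sequence $\tau_j\to+\infty$.

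I would then construct the counterexample by setting $f_{k_j}:=f_{-k_j}:=e^{-(\epsilon_0/2)\varphi(|k_j|)}$ (which is real, so that the reality condition $\overline{f_k}=f_{-k}$ is automatic) and $f_k:=0$ for all other $k \in \Z^n$. A direct computation gives $\|f\|_{\epsilon_0/2}=1$, hence $f \in \mathcal{F}_{\epsilon_0/2}^\varphi \subseteq \mathcal{F}_+^\varphi$. The corresponding solution of~\eqref{L} has Fourier coefficients
\[ |g_{k_j}| = |2\pi k_j\cdot \omega|^{-1}|f_{k_j}| \geq e^{(\epsilon_0/2)\varphi(|k_j|)}, \]
and since $\varphi(|k_j|)\to+\infty$ by the standing assumption $\varphi_0=O(\varphi)$, taking $r:=-\epsilon_0/4<0$ gives $|g_{k_j}|e^{r\varphi(|k_j|)}\geq e^{(\epsilon_0/4)\varphi(|k_j|)}\to +\infty$. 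Hence $g\notin \mathcal{F}_{-\epsilon_0/4}^\varphi$ and a fortiori $g\notin \mathcal{F}_-^\varphi$.

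The argument is entirely elementary and I do not expect any serious obstacle; if anything it is softer than the one for Lemma~\ref{lem2}, because the failure target $\mathcal{F}_-^\varphi$ is an intersection, so it is enough to produce one single negative exponent $r$ for which $g\notin\mathcal{F}_r^\varphi$, rather than all of them. The only point requiring attention is the balance between the decay rate of $f$ and the blow-up rate of $g$: one must leave a positive fraction of the small-divisor gap $\epsilon_0$ unused by the decay of $f$, and splitting $\epsilon_0$ in two equal halves is the cleanest way to achieve this.
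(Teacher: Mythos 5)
Your proof is correct and takes essentially the same route as the paper: the forward direction is Lemma~\ref{lem1} with a suitable $\tau<r$, and the converse reuses the lacunary counterexample from Lemma~\ref{lem2} with the sequence $\tau_j\to+\infty$ replaced by a fixed $\epsilon_0>0$, exactly as the paper indicates. Your choice $|f_{k_j}|=e^{-(\epsilon_0/2)\varphi(|k_j|)}$ (with the explicit symmetrization $f_{-k_j}=f_{k_j}$ to ensure reality) in place of the paper's $|2\pi k_j\cdot\omega|^{-1/2}$ is only a cosmetic variation of the same gap-splitting idea.
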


Again, either we can always solve the equation~\eqref{L} in the same regularity class or else the solution may loose all positive regularity (observe that $\mathcal{F}_{-}^{1}=\mathcal{F}_{-}^{\varphi_1}$ is precisely the space of periodic hyperfunctions in the sense of Kato). The proof is completely analogous to the proof of Lemma~\ref{lem2}: when $\omega \in \mathcal{A}_+^\varphi$ this follows from Lemma~\ref{lem1} and when $\omega \notin \mathcal{A}_+^\varphi$ we have
\[ \limsup_{k \in \Z^n\setminus\{0\}} \frac{\ln (|2\pi k\cdot\omega|^{-1}|}{\varphi(|k|)}>0 \]
and one can use the same example as before replacing the positive sequence $\tau_j \rightarrow +\infty$ by some $\tau>0$. 

\subsection{The Bruno condition}\label{s24}

Finally, we would like to discuss the so-called Bruno condition, which was first introduced by Bruno in~\cite{Bru71} in a linearization problem in the vicinity of a singular point, and then in various other equivalent forms for other linearization problems on a torus by R\"{u}ssmann (see for instance~\cite{Rus80},~\cite{Rus94} and~\cite{Rus01}). This condition appears in non-linear version of Lemma~\ref{lem3} in the analytic case and for $n=2$, it can be actually characterized this way (recall the problem~\eqref{Yoc02} in~\S\ref{s1}); we shall explain how it can also be characterized if one looks at Lemma~\ref{lem2} in an arbitrary non quasi-analytic class, at the projective limit. We should say that a weight is non-quasi analytic if
\[ \int_1^{+\infty}\frac{\varphi(t)}{t^2}dt<\infty \]
and we denote by $NQ$ the set of all non quasi-analytic weights; the terminology is justified by the famous Denjoy-Carleman theorem which identifies the above condition as the necessary and sufficient conditions for non quasi-analyticity (or even more precisely, for the existence of functions with arbitrarily small support). Now following the definition of~\cite{Rus01}, a vector satisfies the Bruno condition if and only if it belongs to $\mathcal{A}^\varphi_1$ for some quasi-analytic weight $\varphi$; indeed the ``approximating function" in the sense of R\"{u}ssmann is nothing but the exponential of a quasi-analytic weight (we point out that R\"{u}ssmann do not require $\ln(1+t)=O(\varphi(t))$, but without such an assumption $\mathcal{A}^\varphi_1$ is always empty so the definition remains unchanged). Using the fact that a weight $\varphi$ is quasi-analytic if and only if $\tau\varphi$ is quasi-analytic for some $\tau>0$, we arrive at the following representation of Bruno vectors
\begin{equation}\label{repB}
\mathcal{B}=\bigcup_{\varphi \in NQ}\mathcal{A}^\varphi_1=\bigcup_{\varphi \in NQ}\bigcup_{\tau>0}\mathcal{A}^\varphi_\tau=\bigcup_{\varphi \in NQ}\mathcal{A}^\varphi_{\infty}.
\end{equation}   
For $n=2$, $\omega=(1,\alpha)$, the elementary equivalence of $\omega \in \mathcal{B}$ and $\alpha \in \mathcal{B}$, as it was defined in~\S\ref{s1}, is proved in~\cite{Rus01} for instance. Lemma~\ref{lem2} allows the following characterization of $\mathcal{B}$. 

\begin{lemma}\label{lem4}
The vector $\omega \in \mathcal{B}$ if and only if there exists $\varphi \in NQ$ such that the equation~\eqref{L} can be solved in $\mathcal{F}_{\infty}^{\varphi}$.
\end{lemma}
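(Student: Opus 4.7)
The plan is to read off the statement as a direct two-line consequence of Lemma~\ref{lem2} and the representation \eqref{repB} of the Bruno set, so the task reduces to assembling these two ingredients cleanly rather than to introducing any new construction.

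For the direct implication, starting from $\omega \in \mathcal{B}$ I would invoke \eqref{repB} to produce some non-quasi-analytic weight $\varphi \in NQ$ with $\omega \in \mathcal{A}_\infty^\varphi$. The positive half of Lemma~\ref{lem2} then asserts that for every $f \in \mathcal{F}_\infty^\varphi$ the solution $g$ of \eqref{L} stays in $\mathcal{F}_\infty^\varphi$; this is precisely what it means for \eqref{L} to be solvable in $\mathcal{F}_\infty^\varphi$, and it exhibits the non-quasi-analytic weight required by the statement.

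For the converse, I would argue by contraposition using the second half of Lemma~\ref{lem2}. Suppose $\varphi \in NQ$ is such that \eqref{L} is solvable in $\mathcal{F}_\infty^\varphi$. If $\omega$ did not belong to $\mathcal{A}_\infty^\varphi$, Lemma~\ref{lem2} would furnish $f \in \mathcal{F}_\infty^\varphi$ whose associated solution fails to lie even in the much larger space $\mathcal{F}_{-\infty}^\varphi$, a fortiori not in $\mathcal{F}_\infty^\varphi$, contradicting the assumed solvability. Hence $\omega \in \mathcal{A}_\infty^\varphi$, and then $\omega \in \mathcal{B}$ by \eqref{repB}.

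There is essentially no obstacle here, since all the substantive work sits in Lemma~\ref{lem2} (the quantitative estimate and the lacunary bad Fourier series construction) and in the rewriting \eqref{repB} of $\mathcal{B}$ as a union of the arithmetic classes $\mathcal{A}_\infty^\varphi$ over $\varphi \in NQ$. The only delicate point, already handled inside Lemma~\ref{lem2}, is that the failure $\omega \notin \mathcal{A}_\infty^\varphi$ produces a maximal loss of regularity in the solution, which makes the contradiction with solvability in $\mathcal{F}_\infty^\varphi$ unambiguous.
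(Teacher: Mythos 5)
Your argument is correct and is exactly the route the paper intends: Lemma~\ref{lem4} is obtained by combining the representation~\eqref{repB} of $\mathcal{B}$ as $\bigcup_{\varphi \in NQ}\mathcal{A}_{\infty}^{\varphi}$ with both halves of Lemma~\ref{lem2}, using (as you do) that failure of $\omega \in \mathcal{A}_{\infty}^{\varphi}$ forces the normalized solution $g$ outside even $\mathcal{F}_{-\infty}^{\varphi} \supseteq \mathcal{F}_{\infty}^{\varphi}$. No gaps; this matches the paper's proof.
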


The discrepancy between $\mathcal{B} \subsetneq \mathcal{R}$ seems to be related somehow to the existence of non-trivial quasi-analytic class $\mathcal{F}^\psi_\infty$ which strictly contains the analytic class $\mathcal{O}(\T^n)$. Indeed, first observe that when $\varphi$ is non quasi-analytic, we obviously have $\varphi(t)=o(t)$, therefore we have the following inclusions, which are moreover strict:
\[ \mathcal{F}_\infty^\varphi \supsetneq \mathcal{O}(\T^n), \quad \mathcal{A}^\varphi_{\infty} \subsetneq \mathcal{R}.\] 
Taking respectively the intersection and the union over all non quasi-analytic classes one actually obtains 
\[ \bigcap_{\varphi \in NQ} \mathcal{F}_{\infty}^{\varphi}=\mathcal{O}(\T^n),  \quad \mathcal{B}=\bigcup_{\varphi \in NQ} \mathcal{A}_{\infty}^{\varphi} \subsetneq \mathcal{R}. \]
The first equality claims that the intersection of all non quasi-analytic classes is precisely the space of real-analytic functions: this is a rather surprising result first proved in~\cite{Bang} in a different setting (see~\cite{Bom63} for more general results and see also~\cite{Bjo66}, ~\cite{CZ82} and~\cite{BMT90} for results adapted to our setting). On the other hand, the set $\mathcal{B}$, which is the union of $\mathcal{A}_{\infty}^{\varphi}$ over all non quasi-analytic classes, is not equal to $\mathcal{R}$; indeed take any quasi-analytic weight $\psi(t)=o(t)$ for which $\mathcal{O}(\T^n) \subsetneq \mathcal{F}^\psi_\infty$, for instance one may choose $\psi=\psi_1$ with 
\begin{equation}\label{quasi}
\psi_1(t):=t(\log(1+t))^{-1}
\end{equation}
then any vectors in $\mathcal{A}^\psi_\infty$ (which is non-empty) is in $\mathcal{R}$ but not in $\mathcal{B}$.    

As a last remark, we would like to point out that in his first attempts to obtain a linearization result with a Bruno condition in any dimension, R\"{u}ssmann (see~\cite{Rus80}) required the weight to be not only non quasi-analytic but also that $t^{-1}\varphi(t)$ decreases monotonically to zero; the extra condition is the monotonicity requirement but such a (a priori mild) condition prevents the identification of vectors $\omega=(1,\alpha)\in \R^2$ satisfying this condition with Bruno numbers $\alpha$. In fact, when $t^{-1}\varphi(t)$ decreases monotonically to zero, it is easy to show that $\varphi$ is sub-additive; if we let $NQS$ be the set of all non quasi-analytic sub-additive weights, then we have another Bruno type condition
\begin{equation}\label{QA1}
\mathcal{B}^1:=\bigcup_{\varphi \in NQS}\mathcal{A}^\varphi_{\infty} \subsetneq \mathcal{B}.
\end{equation}   
It is proved in~\cite{Bjo66} that for $\varphi \in NQS$, not only we have $\varphi(t)=o(t)$ but also $\varphi(t)=o(\psi_1(t))$ with $\psi_1$ as in~\eqref{quasi} and therefore (see~\cite{Bom63},~\cite{Bjo66}) the intersection of all non quasi-analytic sub-additive classes is no more the real-analytic class, but the larger quasi-analytic class associated to $\psi_1$
\begin{equation}\label{QA2}
\bigcap_{\varphi \in NQS}=\mathcal{F}_+^{\psi_1} \mathcal{F}_{\infty}^{\varphi} \supsetneq \mathcal{O}(\T^n).
\end{equation}
One should point out that in~\eqref{QA1} and~\eqref{QA2}, one could replace $NQS$ by quasi-analytic weights which are either strictly concave or satisfy $t^{-1}\varphi(t) \searrow 0$: the strict concavity implies the monotonicity condition which implies sub-additivity, and quasi-analytic sub-additive weights are equivalent to strict concave weights (see~\cite{Bjo66} and~\cite{PV84} for instance). For non-linear problems, one may largely speculate that in the quasi-analytic class $\mathcal{F}_+^{\psi_1}$, vectors in  $\mathcal{B}^1$ could play the same role as Bruno vectors in the analytic class.  

Finally, let us briefly explain how we will use in the sequel the observation that Bruno vectors can be represented as~\eqref{repB}. For a given $\varphi \in NQ$, this observation implies that non-linear analytic problems (such as~\eqref{Yoc02} described in~\S\ref{s1}) can be solved if $\omega \in \mathcal{A}^\varphi_\tau$ for an arbitrary $\tau$; if one has a suitable control on how the analytic problem can be solved, and if one can characterize functions in $\mathcal{F}^\varphi_r$ in terms of approximation by real-analytic functions then one may expect that $\omega \in \mathcal{A}^\varphi_\tau$ allows also to solve the problem in the larger space $\mathcal{F}^\varphi_r$; clearly there will be a competition in this approach but it does work in the smooth case $\varphi=\varphi_0$ as was proved by Moser in~\cite{Mos66}. The competition in the Gevrey case $\varphi=\varphi_a$ is more subtle (since it has to become singular for $a=1$) but we will show that it works too, and the proof also shows that it should be true for a large class of non quasi-analytic weights (the Gevrey weights allow for a simpler approach and more explicit computations).

\section{A non-linear problem and main result}\label{s3}

\subsection{Linearization of vector fields on the torus}\label{s31}

So now we finally look at the continuous version of the problem~\eqref{Yoc02} in any dimension $n \geq 2$. We consider a smooth vector field $X=X_\omega+F$ on $\T^n$, where $F$ is sufficiently close to zero with respect to a suitable topology, and we wish to conjugate $X$ to $X_\omega$ by a diffeomorphism close to the identity. This is clearly not possible in general as there are obvious topological restrictions, so to circumvent them, the question we ask is whether we can find a constant vector field $X_\lambda=\lambda \in \R^n$ close to zero and a diffeomorphism $\Phi : \T^n \rightarrow \T^n$ close to the identity such that
\begin{equation}\label{V}\tag{V}
 \Phi^*(X-X_\lambda)=X_\omega
\end{equation} 
that is we wish to conjugate the modified vector field $X-X_\lambda$, with an unknown $\lambda \in \R^n$, to $X_\omega$. So given $X=X_\omega+F$, a solution of~\eqref{V} is a couple $(\Phi,\lambda)$ which satisfy the equation~\eqref{V} and which is close to $(\mathrm{Id},0)$: upon normalization (for instance fixing the average of $\Phi-\mathrm{Id}$ to be zero) it can be shown that such a solution, if it exists, is unique. Such a ``modying term" formulation was introduced by Arnold (see~\cite{Arn61}) in this setting, and then generalizes by Moser (see~\cite{Mos67}). If we require the rotation set of $X$ to contains $\omega$, then it is not hard to show that necessarily $\lambda=0$ and this is precisely  the generalization of~\eqref{Yoc02} in any dimension, but we prefer to keep the modifying term since it gives a more flexible formulation which allows to consider arbitrary vector fields $X$ close to $X_\omega$. In general the problem~\eqref{V} is non-linear, in the sense that both $\lambda=\lambda(F)$ and $\Phi-\mathrm{Id}=(\Phi-\mathrm{Id})(F)$ depend on $F$ in a non-linear fashion. There is, however, one simple case in which~\eqref{V} is actually equivalent to the linear problem~\eqref{L} we studied in~\S\ref{s2}; this is the case where $X$ is proportional to $X_\omega$, which can be written
\begin{equation}\label{rep1}
X=X_\omega+P=(1/f)X_\omega
\end{equation} 
for some nowhere vanishing smooth function $f: \T^n \rightarrow \R$ (one may assume $f$ to be close to $1$ to consider this problem as perturbative, but this is not necessary since it is not a perturbative problem). The flow associated to $X$ is nothing but a time-reparametrization of the flow of $X_\omega$, and it is not hard to prove that the unique (upon normalization) solution $(\Phi,\lambda)$ of~\eqref{V} has to be of the form 
\begin{equation}\label{rep2}
\Phi-\mathrm{Id}=g\omega, \quad \lambda=(1-c)\omega, \quad c=\int_{\T^n}fdx
\end{equation}
where $g: \T^n \rightarrow \R$ is the solution of~\eqref{L}. This is essentially due to Kolmogorov (see \cite{Kol53}) in an implicit form and Herman (see~\cite{Her91}) in this more explicit form. It follows that the necessary and sufficient conditions on $\omega$ which were given in respectively Lemma~\ref{lem2} and Lemma~\ref{lem3} to solve~\eqref{L} in respectively $\mathcal{F}_\infty^{\varphi}$ and $\mathcal{F}_+^\varphi$ are automatically necessary (but a priori not sufficient) conditions to solve~\eqref{V} in those regularity classes. In general, since we look at~\eqref{V} in a perturbative setting, linearizing the conjugacy equation at $P=0$, $\Phi=\mathrm{Id}$, yields a linear equation between vector fields, which is nothing but a vector-valued version of~\eqref{L}. The basic question is whether the optimal arithmetic conditions introduced for the linear problem, which we know are necessary for the non-linear problem, are also sufficient.

For the general smooth case, the answer is yes: the vector $\omega \in \mathcal{D}$ if and only if one can solve the problem~\eqref{V}, which is the exact analogue of Lemma~\ref{lem2} (observe that the analogue of Lemma~\ref{lem3} does not make any sense here since the corresponding arithmetic class is empty). In fact, a much more precise statement holds true: if $\omega \in \mathcal{D}^\tau$ for some $\tau\geq n-1$, then if suffices for $P$ to be of class $C^r$ with $r>\tau+1$ ($r\notin \N$) for $\Phi$ to be of class $C^{r-\tau-\epsilon}$ for any $\epsilon>0$. This is almost a non-linear analogue of Lemma~\ref{lem1}, except from the fact that one needs $r>\tau+1$ instead of $r>\tau$ (for $\Phi$ to be at least $C^1$), but the loss of regularity is essentially the same, namely one looses any $\tau'>\tau$ instead of $\tau$.

For the analytic case, the answer is no in general. As we already explained, the Bruno condition $\omega \in \mathcal{B}$ which is stronger than the R\"{u}ssmann condition $\mathcal{R}$, is known to be sufficient to solve~\eqref{V} for any $n\geq 2$ (\cite{Rus94}) and it is known that it is also necessary for $n=2$ (\cite{Yoc02}), hence there is no analogue of Lemma~\ref{lem3} for the analytic weight $\varphi_1(t)=t$ when $n=2$ (and this is most probably the case for any $n \geq 2$). Here again, a more precise statement is true: Yoccoz's example shows in fact that there is no analogue of Lemma~\ref{lem1} since the Bruno condition is optimal for~\eqref{V} no matter how large is the regularity parameter $r>0$ (which here is essentially a width of analyticity). The question of whether a non-linear analogue of Lemma~\ref{lem2} holds true (the regularity class in this case corresponds to entire functions) is open up to our knowledge.    

The main result of this article deals with Gevrey regularity, which are associated to the weight $\varphi_a(t)=t^a/a$ for $0<a<1$, and we shall prove that in this case the answer is again positive. Actually, we shall prove a non-linear perturbative version of Lemma~\ref{lem1} which is Theorem~\ref{th1} below, with however a significant quantitative difference: the loss of derivatives will be essentially $c(a)\tau$ with $1<c(a)<+\infty$; at the formal limit $a\rightarrow 0$ then $c(a)\rightarrow 1$ in concordance with the fact that in this case functions tend to ``behave" as general smooth functions (beware though that there are many smooth functions that do no belong to any Gevrey class) and at the formal limit $a\rightarrow 1$ then $c(a)\rightarrow +\infty$ in concordance with the fact that such a result cannot hold true in the analytic case (beware that other constants, such as the threshold of applicability, will have a singular limit as $a\rightarrow 1$ so the fact that $c(a)\rightarrow +\infty$ is not necessarily unavoidable). Regardless of this, such a statement is still enough to guarantee we have exact  non-linear analogues of Lemma~\ref{lem2} and Lemma~\ref{lem3}. 

\subsection{Main results: the Gevrey case}\label{s32}

To state our result properly for the Gevrey weight $\varphi_a(t)=t^a/a$ for $0<a<1$, we recall that
\begin{equation}\label{space}
\mathcal{F}_r^a:=\mathcal{F}_r^{\varphi_a}=\{f \in \mathcal{F} \; | \; ||f||_r=\sup_{k \in \Z^n}|u_k|e^{r\varphi_a(|k|)}<+\infty \},
\end{equation}
and
\begin{equation}\label{ari}
\mathcal{A}_\tau^a:=\mathcal{A}_\tau^{\varphi_a}=\{\omega \in \R^n \; | \; \gamma_\tau^{-1}(\omega)=\sup_{k \in \Z^n\setminus\{0\}}|2\pi k\cdot \omega|^{-1}e^{-\tau\varphi_a(|k|)}<+\infty \} . 
\end{equation}
In particular, $\mathcal{F}_r^a$ is strictly contained in $C^{\infty}(\T^n)$ for any $r>0$ and $\mathcal{A}_\tau^a$ strictly contains $\mathcal{D}$ for any $\tau>0$. For any smooth vector field $F=(f_1,\dots,f_n) : \T^n \rightarrow \R^n$ , slightly abusing notation we shall still say that $F \in \mathcal{F}_r^a$ if
\[ ||F||_r:=\max_{1 \leq i \leq p }||f_i||_r<+\infty. \] 
This is the main result.

\begin{Main}\label{th1}
Let $0<a<1$, $r_0>r>0$, $\tau>\tau_0>0$, $1<\kappa<2$, assume that
\begin{equation}\label{seuilth}
r>c\tau, \quad c=c(a,\kappa):=32^{a/(1-a)}(\kappa-1)(\kappa^{1-a}-1)^{-1/(1-a)}
\end{equation}
and let $\rho:=r-c\tau>0$ and $\upsilon:=8^{-a/(1-a)}\rho>0$. Then there exist a small positive constant $\varepsilon^*$ and a large positive constant $C$ that depend on $a,r_0,r,\tau,\tau_0,\kappa$ and $n$ such that for any 
\[ X=X_\omega+F, \quad \omega \in \mathcal{A}_{\tau_0}^a, \quad  ||F||_{r_0}=\varepsilon \leq \gamma_{\tau_0}(\omega)\varepsilon^*\]
there exists a vector $\lambda \in \R^n$ and a diffeomorphism $\Phi \in \mathcal{F}_{\nu}^a$, with $\nu:=\upsilon/2$, which solves~\eqref{V} with the estimates
\[ |\lambda| \leq C\varepsilon, \quad ||\Phi-\mathrm{Id}||_{\nu} \leq C(\varepsilon/\gamma_{\tau_0}(\omega))^{\iota}, \quad \iota:=8^{-a/(1-a)}(2r)^{-1}\upsilon. \] 
\end{Main}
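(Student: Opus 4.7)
The plan is to follow Moser's scheme of approximation by analytic vector fields. I would approximate the Gevrey perturbation $F \in \mathcal{F}_{r_0}^a$ by a sequence $(F_j)$ of real-analytic vector fields, holomorphic on shrinking complex strips of width $s_j \searrow 0$, so that on the strip of width $s_j$ the increment $\|F_j - F_{j-1}\|_{s_j}$ decays like $\exp(-r\, s_j^{-a/(1-a)})$; such an approximation in fact characterizes the Gevrey class $\mathcal{F}_r^a$, as recalled in \S\ref{s42}. For each $j$ one solves the analytic version of~\eqref{V} for the approximant after conjugation by the previous steps, composes the resulting analytic conjugacies $\Psi_j := \Phi_0 \circ \cdots \circ \Phi_{j-1}$, and shows that the limit $(\Phi,\lambda)$ lies in $\mathcal{F}_\nu^a$ with the announced estimates.

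The analytic engine at each step is the quantitative linearization theorem for~\eqref{V} in the analytic category, due essentially to R\"{u}ssmann with later refinements by P\"{o}schel and Popov. Given an analytic $X_\omega + G$ on a strip of width $s$ with $\omega \in \mathcal{A}_{\tau_0}^a$ and $\|G\|_s$ small enough relative to $\gamma_{\tau_0}(\omega)(s-s')^{N}$ (where $N$ scales like $\tau_0$), this produces an analytic conjugacy on the smaller strip of width $s'$ together with a modifying vector, with a quadratic-type Newton remainder explicitly controlled in terms of the loss $s-s'$. The representation~\eqref{repB} of Bruno vectors is what allows one to work with the arithmetic class $\mathcal{A}_{\tau_0}^a$ rather than a strict Diophantine condition, provided the analytic scheme is iterated with care.

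With strips $s_j := s_0\, \kappa^{-j}$ for the given $1 < \kappa < 2$, the induction pits two geometric series against each other at step $j$: the Newton step loses a polynomial factor $(s_{j-1}-s_j)^{-N} \propto \kappa^{jN}$, while the Gevrey approximation gains a factor $\exp(-r\, s_j^{-a/(1-a)}) = \exp(-r\, s_0^{-a/(1-a)}\, \kappa^{ja/(1-a)})$. Requiring the gain to dominate the loss uniformly in $j$ reduces, after elementary algebra, exactly to~\eqref{seuilth}: the prefactor $32^{a/(1-a)}$ comes from the numerical constants in the analytic Newton step, and $(\kappa-1)(\kappa^{1-a}-1)^{-1/(1-a)}$ from summing the geometric series. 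The residual slack $\rho = r - c\tau$ then determines the Gevrey width $\upsilon = 8^{-a/(1-a)}\rho$ on which the compositions $\Psi_j$ remain uniformly analytic on strips of width comparable to $\upsilon$, while $\lambda = \sum_j \lambda_j$ converges absolutely with $|\lambda| \le C\varepsilon$.

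The most delicate part --- and the reason $c(a) \to +\infty$ as $a \to 1$ --- is showing that the limit $\Phi = \lim_j \Psi_j$ actually lies in $\mathcal{F}_\nu^a$ with the quantitative bound on $\|\Phi - \mathrm{Id}\|_\nu$. Here I would invoke the converse direction of the analytic-approximation characterization of $\mathcal{F}_\nu^a$: each $\Psi_j - \mathrm{Id}$ is analytic on a strip of width $\gtrsim \upsilon$, and the increments $\Psi_{j+1} - \Psi_j$ carry an analytic norm of exactly the right Gevrey shape $\exp(-\iota\, s_j^{-a/(1-a)})$, which is precisely what is needed to deduce membership in $\mathcal{F}_\nu^a$ with the stated exponent $\iota$. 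The main technical obstacle I anticipate is that the Fourier-based $\ell^\infty$ norm $\|\cdot\|_r$ used to define $\mathcal{F}_r^a$ is not sub-multiplicative, so controlling the composition of diffeomorphisms requires transiently passing through an analytic $\ell^1$-norm on strips (as in P\"{o}schel's work) and then converting back; balancing this composition loss against the analytic-to-Gevrey conversion is what forces the wasteful halving $\nu = \upsilon/2$ and the explicit form of $\iota = 8^{-a/(1-a)}(2r)^{-1}\upsilon$ in the statement.
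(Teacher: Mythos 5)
Your overall strategy (Moser-style analytic approximation of the Gevrey perturbation, an analytic linearization at each step, and the converse approximation characterization to place the limit in $\mathcal{F}_\nu^a$) is the same as the paper's, but the central quantitative mechanism is mis-modeled, and this is a genuine gap rather than a presentational difference. You describe the analytic engine as a Newton step whose threshold is $\gamma_{\tau_0}(\omega)(s-s')^{N}$ with $N\sim\tau_0$, i.e.\ a \emph{polynomial} small-divisor loss. That is the Diophantine ($\varphi_0$) situation; for $\omega\in\mathcal{A}_{\tau_0}^a$ the small divisors satisfy~\eqref{arit}, $|k\cdot\omega|\geq \alpha e^{-\tau_0|k|^a/a}$, and the loss on a strip of width $\sigma$ is necessarily stretched-exponential, of the form $e^{\delta\tau\varphi_b(\tau/\sigma)}$ with $b=a/(1-a)$ --- this is exactly what Proposition~\ref{KAM2} (via P\"{o}schel's superlinear scheme and the estimate~\eqref{Psi} of $\Psi_\kappa$) provides. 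With your polynomial loss the ``competition'' you set up is won by the Gevrey gain $e^{-r\varphi_b(r/s_j)}$ for \emph{every} $r>0$, so your elementary algebra cannot produce the condition~\eqref{seuilth}, nor the singular behaviour $c(a,\kappa)\to+\infty$ as $a\to 1$; your attribution of $(\kappa-1)(\kappa^{1-a}-1)^{-1/(1-a)}$ to ``summing the geometric series'' of outer strips $s_j=s_0\kappa^{-j}$ is also off: in the paper the outer strips are dyadic, $\kappa$ is the internal superlinear speed in the analytic KAM theorem (it enters through $\Psi_\kappa$ in~\eqref{psi}--\eqref{Psi}), and the factor $32^{b}$ comes from the ratio $\sigma_{j+1}=u_j/32$ between the KAM loss and the approximation width, not from ``numerical constants in the Newton step''. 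Relatedly, the appeal to the Bruno representation~\eqref{repB} does not by itself justify a quadratic-remainder step for these frequencies; the paper deliberately applies the \emph{full} analytic KAM theorem (with Moser's non-constant modifying term $\Theta^*\lambda$, Proposition~\ref{KAM2}) at each stage precisely to avoid bookkeeping a KAM step with Bruno-type divisors, and your outer iteration's convergence is not established as written.

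Two smaller corrections: the reason $c(a)\to+\infty$ as $a\to1$ is the KAM threshold constant $\delta(a,\kappa)$, not the converse Gevrey characterization, which is the elementary part (Proposition~\ref{popov2}); and the values $\nu=\upsilon/2$ and $\iota=8^{-b}(2r)^{-1}\upsilon$ do not come from $\ell^1$ composition losses but simply from choosing $\nu<\upsilon$ in Proposition~\ref{popov2} and then converting the width $u_0$ back into $\varepsilon$ through its defining relation~\eqref{u_0}. To repair your argument you must replace the polynomial threshold by the correct stretched-exponential one and rebalance: requiring $e^{\delta\tau\varphi_b(\tau/\sigma_j)}\cdot e^{-r\varphi_b(r/u_j)}\leq 1$ along the dyadic scales is what forces $r>32^{b}\delta\tau=c\tau$ and yields the exponent $\rho=r-c\tau$ governing the increments fed into the converse proposition.
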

 
We shall discuss later on this rather technical statement, especially on the conditions $r_0>r$, $\tau>\tau_0$ and $r>c\tau$ but first we would like to observe that these conditions disappear at the limits, that is when we look at the spaces 
\[ \mathcal{F}_{\infty}^{a} :=\bigcap_{r>0} \mathcal{F}_r^a, \quad \mathcal{F}_{+}^{a} :=\bigcup_{r>0} \mathcal{F}_r^a  \]
and their associated arithmetic classes
\[ \mathcal{A}_{\infty}^{a} :=\bigcup_{\tau>0} \mathcal{A}_{\tau}^a, \quad \mathcal{A}_{+}^{a} :=\bigcap_{\tau>0} \mathcal{F}_{\tau}^a  \]
and lead to the following non-technical statements, which are non-linear perturbative analogues of Lemma~\ref{lem2} and Lemma~\ref{lem3}.

\begin{Main}\label{th2}
The vector $\omega \in \mathcal{A}_{\infty}^a$ if and only the equation~\eqref{V} can be solved in $\mathcal{F}_{\infty}^{a}$.
\end{Main}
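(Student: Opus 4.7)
The proof splits into two directions.

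For necessity (``only if''), I would specialize to the integrable family of time-reparametrized fields from~\eqref{rep1}. For any zero-average $f\in\mathcal{F}_\infty^a$, set $f_\delta:=1+\delta f$ for small $\delta>0$ and $X_\delta:=(1/f_\delta)X_\omega=X_\omega+P_\delta$ with $P_\delta\in\mathcal{F}_\infty^a$ small. Formula~\eqref{rep2} pins down the unique normalized solution of~\eqref{V} for $X_\delta$ as $\Phi_\delta-\mathrm{Id}=g_\delta\omega$ and $\lambda_\delta=(1-c_\delta)\omega$, where $c_\delta=1$ (since $f$ has zero average) and $g_\delta$ solves~\eqref{L} with datum $\delta f$. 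If~\eqref{V} is solvable in $\mathcal{F}_\infty^a$ for this family, then $g_\delta\in\mathcal{F}_\infty^a$, hence $g:=g_\delta/\delta$ solves~\eqref{L} in $\mathcal{F}_\infty^a$ for the arbitrary datum $f$; Lemma~\ref{lem2} then yields $\omega\in\mathcal{A}_\infty^a$.

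For sufficiency (``if''), suppose $\omega\in\mathcal{A}_\infty^a$ and pick $\tau_0>0$ with $\omega\in\mathcal{A}_{\tau_0}^a$. Fix once and for all auxiliary parameters $\tau>\tau_0$ and $\kappa\in(1,2)$, so that $c=c(a,\kappa)$ in~\eqref{seuilth} is fixed. Given a sufficiently small $F\in\mathcal{F}_\infty^a$, my plan is to apply Theorem~\ref{th1} iteratively at every target regularity level $\nu>0$: for each such $\nu$, set $\rho:=2\cdot 8^{a/(1-a)}\nu$, $r:=c\tau+\rho$, and pick any $r_0>r$, so that $\|F\|_{r_0}<\infty$. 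Provided the smallness bound $\|F\|_{r_0}\le\gamma_{\tau_0}(\omega)\varepsilon^*$ from Theorem~\ref{th1} is met, the latter furnishes a solution $(\Phi_\nu,\lambda_\nu)$ of~\eqref{V} with $\Phi_\nu-\mathrm{Id}\in\mathcal{F}_\nu^a$. By the uniqueness of the normalized solution recalled at the beginning of~\S\ref{s31}, all these pairs coincide across the admissible $\nu$, so a single $(\Phi,\lambda)$ is produced with $\Phi-\mathrm{Id}\in\bigcap_{\nu>0}\mathcal{F}_\nu^a=\mathcal{F}_\infty^a$, as desired.

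The main obstacle is reconciling the smallness condition across all $\nu>0$ simultaneously: $r_0$ must grow linearly with $\nu$, the threshold $\varepsilon^*=\varepsilon^*(a,r_0,r,\tau,\tau_0,\kappa,n)$ generically shrinks with $r_0$, and $\|F\|_{r_0}$ may grow arbitrarily fast as $r_0\to\infty$ for a general element of $\mathcal{F}_\infty^a$. Since Theorem~\ref{th2} is qualitative, this is handled by interpreting ``$F$ sufficiently small'' in the projective sense: $F$ is taken in the intersection of the countable family of seminorm balls $\{\|\cdot\|_{r_0(\nu_j)}\le\gamma_{\tau_0}(\omega)\varepsilon^*(\nu_j)\}$ associated to a sequence $\nu_j\nearrow\infty$, which is the natural analogue of the seminorm bookkeeping defining the Fr\'echet topology of $\mathcal{F}_\infty^a$ and is non-empty (it contains, for each $j$, all sufficiently small scalar multiples of any fixed element of $\mathcal{F}_\infty^a$). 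The quantitative heavy lifting is done once and for all by Theorem~\ref{th1}, and uniqueness glues the iterated applications into a single $\mathcal{F}_\infty^a$-solution.
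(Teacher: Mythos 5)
Your necessity argument is fine and is exactly the paper's route: the reparametrized fields \eqref{rep1}--\eqref{rep2} reduce solvability of \eqref{V} to solvability of \eqref{L}, and the converse part of Lemma~\ref{lem2} then forces $\omega\in\mathcal{A}_\infty^a$. The problem is in the sufficiency direction, where your reinterpretation of ``$F$ sufficiently small'' as membership in the intersection of the balls $\{\|\cdot\|_{r_0(\nu_j)}\le \gamma_{\tau_0}(\omega)\varepsilon^*(\nu_j)\}$, $\nu_j\nearrow\infty$, empties the statement of content. For any $F$ with a nonzero Fourier mode $F_k$, $k\neq 0$, one has $\|F\|_{r}\ge |F_k|e^{r\varphi_a(|k|)}\to\infty$ as $r\to\infty$, while the radii $\gamma_{\tau_0}(\omega)\varepsilon^*(\nu_j)$ certainly do not tend to infinity; hence the countable intersection contains only (small) constant vector fields, for which \eqref{V} is trivial, and moreover no nonzero scalar multiple $\delta F$ of a general nonconstant $F\in\mathcal{F}_\infty^a$ ever belongs to it (your parenthetical ``it contains, for each $j$, all sufficiently small scalar multiples'' is true ball by ball but false for the intersection). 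So the scheme of applying Theorem~\ref{th1} once per target level $\nu$, with a smallness hypothesis at every level simultaneously, never gets off the ground for an actual perturbation.

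The missing idea is the one the paper leans on: the smallness must be imposed in a single fixed space $\mathcal{F}_{r_0}^a$, with $r_0=r_0(\tau_0)$ chosen once so that \eqref{seuilth} holds, and one must then prove a regularity-propagation statement -- if, in addition to this fixed smallness, $F$ happens to lie in $\mathcal{F}_{r_1}^a$ for an arbitrarily large $r_1$ (with no smallness requirement at level $r_1$), then the solution $\Phi-\mathrm{Id}$ produced by Theorem~\ref{th1} lies in the correspondingly higher Gevrey class. This is the Gevrey analogue of Corollary 1 of Salamon in the smooth case; in the present setting it is obtained by rerunning the iteration of \S\ref{s44} with the same $u_0$ (hence the same smallness threshold, determined only by $\|F\|_{r_0}$), while exploiting the better approximation estimates of Proposition~\ref{popov} available at the higher regularity $r_1$ to upgrade the convergence of $\Phi^j$ in Proposition~\ref{popov2}. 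Once that is in place, a single application at the base level plus the propagation gives $\Phi-\mathrm{Id}\in\bigcap_{\nu>0}\mathcal{F}^a_\nu=\mathcal{F}_\infty^a$, and your appeal to uniqueness of the normalized solution becomes unnecessary. Without some version of this argument (or another device achieving the same uniformity in the smallness condition), the sufficiency half remains unproved.
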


\begin{Main}\label{th3}
The vector $\omega \in \mathcal{A}_{+}^a$ if and only the equation~\eqref{V} can be solved in $\mathcal{F}_{+}^{a}$.
\end{Main}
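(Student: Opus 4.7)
The plan is to derive Theorem~\ref{th3} from Theorem~\ref{th1} (for sufficiency) and from Lemma~\ref{lem3} combined with the Kolmogorov--Herman reduction of~\S\ref{s31} (for necessity). All analytic substance is already packaged in Theorem~\ref{th1}; the present statement is essentially a qualitative corollary once one passes to the inductive limit, in complete parallel with the deduction of Theorem~\ref{th2}.

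\emph{Necessity.} Suppose $\omega \notin \mathcal{A}_+^a$. By Lemma~\ref{lem3}, there is a zero-average $f_0 \in \mathcal{F}_+^a$ whose cohomological primitive $g_0$, solving $X_\omega g_0 = f_0$, lies outside $\mathcal{F}_-^a$ and hence outside $\mathcal{F}_+^a \subseteq \mathcal{F}_-^a$. For $\epsilon>0$ small enough that $f := 1 + \epsilon f_0$ is bounded away from $0$, form
\[ X := f^{-1} X_\omega = X_\omega + P, \qquad P := (f^{-1}-1)X_\omega . \]
Sub-additivity of $\varphi_a$ makes $\mathcal{F}_+^a$ a Banach algebra stable under inversion of functions bounded away from $0$, so $P \in \mathcal{F}_+^a$ and its norm in any prescribed $\mathcal{F}_r^a$ can be made arbitrarily small by shrinking $\epsilon$. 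The representation~\eqref{rep1}--\eqref{rep2} then forces the unique normalized solution of~\eqref{V} to be $\lambda=0$, $\Phi-\mathrm{Id} = \epsilon g_0 \omega \notin \mathcal{F}_+^a$, supplying the desired counterexample.

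\emph{Sufficiency.} Let $\omega \in \mathcal{A}_+^a$ and $F \in \mathcal{F}_+^a$, so $F \in \mathcal{F}_{r_0}^a$ for some $r_0>0$. Fix any $\kappa \in (1,2)$, which determines $c = c(a,\kappa)$; pick $\tau_0>0$ small enough that $2c\tau_0 < r_0$, and set $\tau := 2\tau_0$ and $r := \tfrac{1}{2}(r_0 + c\tau)$. Then $r_0 > r > c\tau$ and $\tau > \tau_0 > 0$ as required by Theorem~\ref{th1}. Since $\omega \in \mathcal{A}_+^a \subseteq \mathcal{A}_{\tau_0}^a$ with finite $\gamma_{\tau_0}(\omega)$, the smallness hypothesis $||F||_{r_0} \leq \gamma_{\tau_0}(\omega)\varepsilon^*$ is simply the quantitative form of ``$F$ close enough to zero''. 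Theorem~\ref{th1} therefore produces $\lambda \in \R^n$ and $\Phi \in \mathcal{F}_\nu^a \subset \mathcal{F}_+^a$ with $\nu = \upsilon/2 >0$ solving~\eqref{V}.

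\emph{Expected obstacle.} There is essentially no further difficulty beyond what is proved in Theorem~\ref{th1}: the argument above is a bookkeeping exercise in choosing $r,\tau,\tau_0,\kappa$ so that the hypotheses of Theorem~\ref{th1} are simultaneously met inside the prescribed $r_0$. The only mildly non-trivial point is the algebra and inversion stability of $\mathcal{F}_+^a$ used in the necessity step, which is classical for sub-additive Gevrey weights.
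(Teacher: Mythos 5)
Your proof is correct and follows essentially the paper's own route: sufficiency is deduced from Theorem~\ref{th1} by choosing, for the given $r_0$, an arithmetic parameter $\tau_0=\tau_0(r_0)$ small enough that~\eqref{seuilth} holds (exactly the reduction indicated in~\S\ref{s32}), and necessity comes from the converse part of Lemma~\ref{lem3} combined with the reparametrized constant vector fields~\eqref{rep1}--\eqref{rep2} and the uniqueness of the normalized solution. The only harmless imprecisions are calling $\mathcal{F}_{+}^{a}$ a Banach algebra (it is an inductive limit of Banach spaces, though indeed stable under products and inversion of nowhere-vanishing functions) and asserting that $P$ is small in \emph{any} prescribed $\mathcal{F}_{r}^{a}$, whereas it is only small in the space $\mathcal{F}_{r_1}^{a}$ containing $f_0$ --- which is all that the inductive-limit formulation requires.
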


The direct implications for both statements follow from Theorem~\ref{th1}. It is important to observe that the requirement that $F$ be close to zero need not be formulated in the limit topology, one only need to assume the norm of $F$ to be small in some fixed space $\mathcal{F}_{r_0}^{a}$: in Theorem~\ref{th2}, one is given $\tau_0$ (possibly large) then it suffices to choose $r_0=r_0(\tau_0)$ large enough to satisfy~\eqref{seuilth}, and in Theorem~\ref{th3}, one is given $r_0$ (possibly small) then it suffices to choose $\tau_0=\tau_0(r_0)$ small enough to satisfy~\eqref{seuilth}. That this is so is clear for Theorem~\ref{th3}, but no so obvious for Theorem~\ref{th2} yet this is a classical matter (one has to verify that we can impose additional smoothness on $F$ in Theorem~\ref{th1} to get additional smoothness on $\Phi$ with the same smallness condition; see for example Corollary 1 of~\cite{Sal04} in the smooth case, the argument in our case is similar). The converse implication of both statements follows from the converse implication in respectively Lemma~\ref{lem2} and Lemma~\ref{lem3}, together with the case of reparametrized constant vector fields described in~\eqref{rep1} and~\eqref{rep2}.    
 
\subsection{Some comments on Theorem~\ref{th1}}\label{sA}

Let us now comment on the technical conditions $r_0>r$, $\tau>\tau_0$ and $r>c\tau$ in Theorem~\ref{th1}. 
The fact that the initial arithmetic parameter $\tau_0$ deteriorates to $\tau>\tau_0$ is in a sense unavoidable: this comes from the fact that for the non-linear problem~\eqref{V}, the small divisors are not the quantities $|k\cdot \omega|$ but rather $|k|^{-1}|k\cdot \omega|$, or equivalently, the small parameter is not the size of $F$ but rather the size of its differential (of course one could still modify the space~\eqref{space} or the set~\eqref{ari} to take this into account, but we have decided not to do so). In the smooth case, this has the effect of replacing $\tau_0$ by $\tau=\tau_0+1$ but in the Gevrey case, any $\tau>\tau_0$ is sufficient. 
Then the initial regularity parameter $r_0$ also deteriorates to $r<r_0$ but this, however, is artificial: this will be used in order to compensate the fact that, throughout the proof, we shall use various non-equivalent norms and consequently, the constants $\epsilon$ and $C$ becomes singular at the limit $r\rightarrow r_0$. One can in principle work with the same norm along the proof and allows $r=r_0$, but for our non-linear problem, this leads to technical difficulties (changing norms will allow us to somehow avoid those difficulties, at the expense of introducing a singular limit $r\rightarrow r_0$). For some other simpler non-linear problems (that will be mentioned below) one can indeed work with the same norm and reach $r=r_0$ (and also $\tau=\tau_0$). 
Finally let us discuss our assumption $r>c(a,\kappa)\tau$ with 
\[ c(a,\kappa)=32^{a/(1-a)}(\kappa-1)(\kappa^{1-a}-1)^{-1/(1-a)}. \]
As we already mentioned, when $a \rightarrow 0$ then $c(a,\kappa) \rightarrow 1$ and this is the best one can hope, in view of the linear analogue given by Lemma~\ref{lem1}. But when $a \rightarrow 1$, then $c(a,\kappa) \rightarrow +\infty$ at variance with Lemma~\ref{lem1}: here we do not know if this is artificial or not.

To discuss this last issue, let us recall that the smooth analogue of Theorem~\ref{th1}, which we already mentioned, can be proved either by analytic approximations or using Nash-Moser theory, and even though these techniques are clearly related, they are not the same (though they may be equivalent in some sense): in Nash-Moser theory a crucial use is made of the so-called ``tame estimates", whereas they are not used at all if one uses analytic approximations. Our proof of Theorem~\ref{th1} will use analytic approximations, together with the application of an analytic KAM theorem at each step, and it is the analytic KAM theorem which introduces this large constant $c=c(a,\kappa)$. One could, and perhaps should, replace the analytic KAM theorem by an analytic KAM step since this is clearly what is needed; to apply a KAM step one essentially has to require $r>\tau$ but the convergence argument becomes quite technical and heuristic considerations suggest that one needs to ask much more than $r>\tau$ (applying a KAM theorem instead of a KAM step makes the convergence proof elementary). In principle we could also use Nash-Moser theory, albeit in a ``generalized" sense since we only have ``generalized" tame estimates, and we would like to point out that with this approach one faces the same singular limit when $a \rightarrow 1$. First recall that for the Fourier $l_1$-norm
\[ ||f||_r^1=\sum_{k \in \Z^n}|f_k|e^{r\varphi(|k|)}\]
associated to any sub-additive weight (so in particular the smooth weight $\varphi_0$, the Gevrey weight $\varphi_a$ and the analytic weight $\varphi_1$), one has the general product estimate
\[ ||fg||_r^1\leq ||f||_r^1||g||_r^1. \]
Now for the smooth weight $\varphi_0$, we have the better tame estimates
\begin{equation}\label{tame1}
||fg||_r^1\leq C(r) (||f||_r^1||g||_0^1+||f||_0^1||g||_r^1), \quad \varphi_0(t)=\ln(1+t) 
\end{equation}
which are fundamental in Nash-Moser theory; those product estimates are the simplest non-linear tame estimates. For the Gevrey weight $\varphi_a$, one can only prove weaker tame estimates
\begin{equation}\label{tame2}
||fg||_r^1\leq C (||f||_r^1||g||_{\chi(a)r}^1+||f||_{\chi(a)r}^1||g||_r^1), \quad \varphi_a(t)=t^a/a 
\end{equation}
with $0<\chi(a)<1$ such that $\chi(a) \rightarrow 0$ as $a \rightarrow 0$ and $\chi(a) \rightarrow 1$ as $a \rightarrow 1$; in general those estimates are the best possible and for the analytic weight $a=1$, one cannot do better than the general product estimate. Even though the estimates~\eqref{tame2} are weaker than those in~\eqref{tame1}, a generalized Nash-Moser argument applies: this was done in~\cite{LZ79} for instance, and their result leads to exactly the same singular behavior as $a \rightarrow +\infty$. More precisely, in the language of Nash-Moser theory we have a ``non-constant loss of derivatives" for the non-linear tame estimates (we still have nice linear tame estimates as Lemma~\ref{lem1} shows) and the result of~\cite{LZ79} requires $r>c'(a)\tau$ with $c'(a)=O_2\left(1/(1-\chi(a))\right)$ (there $\kappa=3/2$ is fixed). Of course product estimates are not sufficient to deal with the non-linear problem~\eqref{V}, as one also needs composition estimates which are usually harder to obtain, and this is why we eventually used analytic approximations, which, in our opinion, turns out to be simpler (even though they require to use non-equivalent norms and induce artificial small loss of regularity). 
 
\subsection{Some comments on other (non) quasi-analytic classes}\label{s33}

Let us now discuss the case of other weights, starting with non quasi-analytic ones. Again, to deal with non-linear problems such as~\eqref{V}, one should require further properties that ensure stability with respect to non-linear operations such as products and compositions: a general property that guarantee such stability properties is the sub-additivity of the weight. A statement such as Theorem~\ref{th1} seems hard to obtain for an abstract weight, since it depends strongly on the weight and on the choice of a norm for a given regularity parameter: this is not the case for statements such as Theorem~\ref{th2} or Theorem~\ref{th3} which depend only on the equivalence class of the weight and on the topology of the limit spaces, so we shall discuss only those statements. Let us say that a weight $\varphi$ is of moderate growth if there exists $H>1$ such that
\[ \liminf_{t \rightarrow +\infty} \frac{\varphi(Ht)}{\varphi(t)}>1. \]  
The following conjecture seems reasonable.

\begin{conjecture}
Theorem~\ref{th2} and Theorem~\ref{th3} holds true for any non quasi-analytic sub-additive weight of moderate growth.
\end{conjecture}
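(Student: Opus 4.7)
The plan is to mimic the proof of Theorem~\ref{th1} in the Gevrey setting, replacing each ingredient that uses the explicit form $\varphi_a(t)=t^a/a$ by its analog for a general non quasi-analytic sub-additive weight $\varphi$ of moderate growth. Since Theorems~\ref{th2} and~\ref{th3} are limit statements, the real target is a quantitative version of Theorem~\ref{th1} valid for such $\varphi$; the limit statements then follow by exhausting $r$ (for Theorem~\ref{th2}) or $\tau$ (for Theorem~\ref{th3}) and combining with Lemmas~\ref{lem2}--\ref{lem3} and the reparametrized case~\eqref{rep1}--\eqref{rep2} for the converse implications.

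First I would set up the almost-analytic extension and analytic approximation machinery for $\mathcal{F}_r^\varphi$: every $f \in \mathcal{F}_r^\varphi$ should admit, for each $\sigma>0$, an analytic approximant $f_\sigma$ on the complex strip of width $\sigma$ with an estimate of the form $\|f-f_\sigma\|_{r'} \lesssim \|f\|_r\, e^{-(r-r')\varphi(1/\sigma)}$ when $r'<r$, together with a converse characterization reconstructing $f$ from such approximants. Non-quasi-analyticity $\int_1^\infty \varphi(t)/t^2\,dt<\infty$ is exactly the Denjoy--Carleman condition that provides the smooth cutoffs required in this construction, while sub-additivity gives $\mathcal{F}_r^\varphi$ (in its $l_1$ incarnation) a Banach algebra structure from which product and Fa\`a di Bruno-type composition estimates follow as in the Gevrey case.

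Next I would run Moser's scheme: pick a geometric sequence of scales $\sigma_j \downarrow 0$, approximate $F$ at scale $\sigma_j$ by an analytic vector field $F_j$, apply the analytic KAM theorem of Popov--R\"{u}ssmann--P\"{o}schel used in the proof of Theorem~\ref{th1} to the problem $X_\omega + F_j$ on the strip of width $\sigma_j$, and sum the resulting corrections $\Phi_j$. The moderate growth assumption $\liminf_{t \to \infty}\varphi(Ht)/\varphi(t)>1$ ensures that the weighted gains $\varphi(\sigma_{j+1}^{-1}) - \varphi(\sigma_j^{-1})$ grow geometrically along the iteration, which is exactly the ingredient that compensates the loss of derivatives at each KAM step and makes the telescoping $\Phi = \lim_j \Phi_j$ converge in some $\mathcal{F}_\nu^\varphi$ with $\nu<r$; in the Gevrey case this role was played by the homogeneity identity $\varphi_a(Ht) = H^a\varphi_a(t)$.

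The main obstacle is quantitative. The analytic KAM step on a strip of width $\sigma$ demands smallness of $F_j$ of the order $\gamma_{\tau_0}(\omega)\,\sigma^{C\tau}$, and this has to be matched against the exponentially small approximation error $e^{-(r-r')\varphi(1/\sigma)}$; balancing the two produces the analog of the threshold $r > c(a,\kappa)\tau$ from~\eqref{seuilth}, now with a constant $c = c(\varphi,\kappa)$ defined through a Legendre-type functional of $\varphi$. Proving that $c(\varphi,\kappa)$ is finite under moderate growth, and tracking it cleanly through the scheme, is the technical heart of the matter. A secondary difficulty is that the tame product estimate~\eqref{tame2} has no immediate analog with a single loss factor $\chi \in (0,1)$: one likely has to work with a scale-dependent $\chi = \chi(\sigma,\varphi)$ and verify by hand that the associated composition estimates remain compatible with the convergence scheme. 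If these points can be controlled uniformly in the class of weights under consideration, the conjecture should follow without introducing qualitatively new ideas beyond those of the Gevrey case.
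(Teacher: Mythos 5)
The statement you are proving is not proved in the paper at all: it is stated as a conjecture precisely because the strategy you outline --- which is essentially the author's own heuristic plan (analytic approximation for $\mathcal{F}_r^\varphi$, Moser's scheme, analytic KAM theorem at each scale, moderate growth for convergence) --- breaks down at one specific quantitative point, and your sketch does not close that point. The balancing step you call ``the technical heart'' is exactly where it fails. In the Gevrey proof, the threshold of the analytic KAM \emph{theorem} (Proposition~\ref{KAM1}) is governed by $\Psi_\kappa(\sigma)=\inf\prod_j\Gamma(\sigma_j)^{\kappa_j}$, and the proof works only because of the special coincidence~\eqref{Psi}: $\Psi_\kappa(\sigma)\leq e^{\delta\tau\varphi_b(\tau/\sigma)}$, i.e.\ $\Psi_\kappa$ is controlled by (a multiple of) the same Young conjugate $\varphi_a^*$ that governs the rate of analytic approximation in Propositions~\ref{dynkin} and~\ref{popov}. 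For a general non quasi-analytic sub-additive weight this is no longer true --- the paper states in \S\ref{s33} that simple examples show the KAM theorem's threshold is \emph{not} governed by the Young conjugate of $\varphi$ --- so your constant $c(\varphi,\kappa)$ ``defined through a Legendre-type functional of $\varphi$'' need not exist in any useful form, and the comparison between the KAM smallness requirement and the approximation error $e^{-\,\cdot\,\varphi^*}$ cannot be made. To have any chance one must replace the KAM theorem by a single KAM \emph{step} (whose threshold is of Young-conjugate type), but then the elementary convergence argument of \S\ref{s44}, which you propose to reuse, no longer applies and a genuinely new, more involved scheme is needed; whether such a scheme converges is the open content of the conjecture, not a detail to be ``controlled uniformly''.

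Two secondary inaccuracies feed into the same problem. The approximation rate for $f\in\mathcal{F}_r^\varphi$ on a strip of width $\sigma$ is not $e^{-(r-r')\varphi(1/\sigma)}$ but is governed by the Young conjugate, $e^{-r\varphi^*(\sigma/r)}$ (in the Gevrey case $\varphi_a^*(\xi)=\varphi_b(1/\xi)$ with $b=a/(1-a)$, which is a \emph{different} weight from $\varphi_a(1/\xi)$); and the KAM smallness threshold on a strip of width $\sigma$ is not of the power form $\gamma_{\tau_0}(\omega)\sigma^{C\tau}$ (that is the smooth/Diophantine case) but exponential in $\varphi$-dependent quantities. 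Since the whole question is whether these two exponents can be matched for a general weight, getting their form wrong hides the obstruction rather than resolving it. As written, your proposal reproduces the plan the paper already describes and explicitly identifies as insufficient without new ideas, so it does not constitute a proof of the conjecture.
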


Indeed, for any non quasi-analytic sub-additive weights, the result of~\S\ref{s42}, which deals with characterization of functions in $\mathcal{F}_r^\varphi$ by their approximation with real-analytic functions, and the result of~\S\ref{s43}, which deals with the analytic KAM theorem for vectors in $\mathcal{A}_r^\varphi$, apply (for the results of~\S\ref{s42}, one needs to change to some equivalent weight and somehow loose a precise control on the regularity parameter, see~\cite{PV84} or~\cite{FNRS}). To be more accurate, the approximation by real-analytic functions and the application of an analytic KAM step (and not KAM theorem) are both governed by the same function, which is the young conjugate of $\varphi$ (see~\S\ref{s42} for a definition): the case of Gevrey weight is simpler since its Young conjugate has an explicit expression, and, more importantly, it also governs the application of the analytic KAM theorem (up to a large factor, a point wich we already discussed). For more general weights, simple examples show that this last point is no longer true so to solve the above conjecture, one probably has to replace the analytic KAM theorem by an analytic KAM step, but heuristic considerations suggest that one can still obtains a convergent scheme (which has to be more involved than what is done in~\S\ref{s44}). The moderate growth condition, which also frequently appears in the literature, is clearly used in the proof in~\S\ref{s44}, but we do not know if this is necessary: again Gevrey weights are homogeneous and this moderate growth property is then obvious and very explicit. Apart from the Gevrey weight, a specific family of weights satisfying the requirements of the above conjecture is given by
\[ \psi_{b}(t)=t(\ln(1+t))^{-b}, \quad b>1. \]

The quasi-analytic case is more subtle, and we should only discuss the analogue of Theorem~\ref{th3} in the most representative case of the weight
\[ \psi_1(t)=t(\ln(1+t))^{-1} \]
which already appeared in~\S\ref{s24}. We can ask the following question.

\begin{question}
What is the necessary and sufficient condition on $\omega$ to solve~\eqref{V} in $\mathcal{F}_{+}^{\psi_1}$?
\end{question}

The condition $\omega \in \mathcal{A}^{\psi_1}_+$, defined in~\S\ref{s23}, is clearly a necessary condition but we do not know if it is also a sufficient condition. The results of~\S\ref{s42} do apply as well, but not the results of~\S\ref{s43} since vectors in $\mathcal{A}^{\psi_1}_{1}$ are not Bruno: but exactly as before, one could replace the analytic KAM theorem by an analytic KAM step (which only requires $\varphi(t)=o(t)$, hence applies to $\varphi=\psi_1$) but unlike non-quasi analytic classes, heuristic considerations do not suggest that such a scheme could converge. 

For this quasi-analytic problem, it may be the case that Bruno type condition appear, like they do for the analytic problem. Results in~\cite{BFmams} do apply here and give the following sufficient condition to solve~\eqref{V} in $\mathcal{F}_{+}^{\psi_1}$:
\begin{equation*}
\omega \in \mathcal{A}^{\varphi}_{1}, \quad \int_{1}^{+\infty}\frac{\ln(1+t)\varphi(t)}{t^2}<+\infty. 
\end{equation*}
One may still improve this condition, perhaps the weaker condition $\omega \in \mathcal{B}^1$ which was introduced in~\S\ref{s24} and reads $\omega \in \mathcal{A}^{\varphi}_{1}$ for some non quasi-analytic sub-additive weight (equivalently, $\varphi$ is non quasi-analytic and satisfy $t^{-1}\varphi(t) \searrow 0$), could play a role here.

\subsection{Some comments on other non-linear problems}\label{s34} 

Finally, we claim that Theorem~\ref{th1} (and hence Theorem~\ref{th2} and Theorem~\ref{th3}) holds true also for other non-linear problems on the torus: we chose the case of perturbation of constant vector fields  since it is the simplest case in which non-trivial arithmetic conditions are known to be necessary. But as it will be clear from the proof (which will be described in~\S\ref{s41}), everything works also for the persistence of Lagrangian quasi-periodic invariant torus in Hamiltonian systems as pioneered by Kolmogorov (\cite{Kol54}), or, even more generally, for the quasi-periodic solutions constructed in~\cite{Mos67}  upon introducing modifying terms: consider this more general setting only introduces further non-essential technicalities that we have decided to avoid. 
 
In particular everything applies as well to the problem of reducibility of elliptic quasi-periodic cocycles, as studied in~\cite{DS75} and~\cite{Rus80} for instance: with modifying terms this is nothing but a ``linear" particular case of~\cite{Mos66} in which the frequency $\omega$ is fixed under perturbation (but some other ``elliptic" frequency moves in a non-linear fashion). This problem is technically simpler as only product estimates are needed (see for instance~\cite{BCL19}) and the proof of the equivalent of Theorem~\ref{th1} in this case greatly simplifies: using a ``generalized" Nash-Moser argument that we already described in~\S\ref{sA}, we can work with the same norm and the statement of Theorem~\ref{th1} holds true with $r=r_0$ and even $\tau=\tau_0$. But more seems to be true: indeed it was proved in~\cite{HY12} (see~\cite{AFK11} for the discrete setting) that for $n=2$, the analogue of Theorem~\ref{th1} also holds true even for the analytic case $a=1$ (with a proper arithmetic condition on the elliptic component): hence there are no Bruno type condition as far as the ``base" frequency $\omega$ is concerned. It is reasonable to expect that there should be no Bruno type condition on $\omega$ for any $n \geq 2$, yet this is an open problem.

\section{Proof of the main result}\label{s4}

\subsection{Strategy of the proof, following Moser}\label{s41}

The proof follows Moser's argument of approximation by real-analytic functions, as described for instance in~\cite{Mos66} in the smooth case. The proof in the smooth case relies on the following two principles. First, finitely differentiable function can be characterized by the rate of approximation by real-analytic functions; if $r$ is the regularity and $s_j$ is the sequence (converging to zero) of analytic widths associated to the analytic approximations, then the optimal rate of convergence is of order $s_j^r$. It is important to observe that those analytic approximations can actually be chosen to be much more than analytic, namely one can chose entire functions of exponential type (which, in the periodic case we are considering, are nothing but trigonometric polynomials) and they are easily obtained by convolution. Second, for an analytic perturbation, with analytic width $s_j$, of a constant Diophantine vector field with exponent $\tau>0$, the threshold of applicability of the analytic KAM theorem is easily seen to be of order $s_j^{\tau+1}$. Combining these two principles one obtains a result in regularity $r>\tau+1$.

In the $a$-Gevrey case for $0<a<1$, we will follow the same principles but they are somehow more complicated in this situation. First, as it was observe in~\cite{Pop04}, approximation of Gevrey functions by real-analytic functions through entire functions of exponential type do not lead to an optimal rate of approximation: if $r$ is the regularity and $s_j$ is the sequence of analytic widths, then the rate of convergence is of order $\exp\left(-r(1/s_j)^a\right)$ and do not characterize $a$-Gevrey functions: one could argue that the space of entire functions of exponential type is too small and do not allow to discriminate the case $0<a<1$ from the analytic case $a=1$, whereas one obviously would like that the rate of approximation tends to infinity as $a$ approaches $1$. The idea of Popov (see~\cite{Pop04}) is to obtain a sequence of analytic approximations in a much more precise way through almost analytic extensions, which are extension to the complex domain for which the $\bar{\partial}$-operator do not vanish (so the extension is not analytic) but vanishes asymptotically with a precise rate as the imaginary part tends to zero. Once we have almost analytic extensions, one can further approximate them to solve the $\bar{\partial}$-operator and this leads to a sequence of real-analytic approximations with an optimal rate of order $\exp\left(-r(r/s_j)^b\right)$ with $b=a/(1-a)$, which does tend to infinity as $a$ tends to one and which does characterizes $a$-Gevrey functions. Second, the threshold of applicability of the analytic KAM theorem with a Bruno frequency vector (recall that frequency vectors $\omega \in \mathcal{A}_\tau^a$ are Bruno) is more subtle, but very precise statements have been given by R\"{u}ssmann (see~\cite{Rus80}, \cite{Rus94}, \cite{Rus01}). In fact, results in~\cite{Rus94} and \cite{Rus01} do apply to an arbitrary Bruno vector (see also~\cite{Pos11},~\cite{BF12}) and it turns out that they are not very well suited for the special Bruno vectors $\omega \in  \mathcal{A}_\tau^a$ we are considering (they lead to an artificial singular limit when $a$ approaches zero). Consequently, we shall rely on a result of P\"{o}schel (\cite{Pos89}) which follows~\cite{Rus80} and gives a quite precise statement for vectors $\omega \in  \mathcal{A}_\tau^a$: the threshold is of order $\exp\left(-c\tau(\tau/s_j)^b\right)$ with $c=c(a)>1$. Again, combining these two principles one obtains a result in regularity $r>c\tau$.

\subsection{Analytic approximation, following Popov}\label{s42}

In this section we recall the results of Popov (see~\cite{Pop04} and also~\cite{PH16}) on the approximation of Gevrey functions by real-analytic functions through almost analytic extension. Some little modifications are required as far as the almost analytic extensions are concerned, since we use Fourier based $l_{\infty}$-norm whereas the results in~\cite{Pop04} use space based $L_{\infty}$-norm (that characterize the regularity of a function through the growth of the sequence of its derivatives). One could simply convert our Fourier norm into a proper space norm, as this would only involves an arbitrarily small loss of regularity parameter (up to some polynomially large factor that can be absorbed by the exponentially small error term that will come into play; anyway such a procedure will be used several times in the sequel). However those results are known in greater generality and they are valid for a large class of weights (even some quasi-analytic weights). The first general result on the existence of almost analytic extension is due to Dynkin (see~\cite{Dyn93} for a survey) still using space based norms; for Fourier norms we shall rely on the results contained in~\cite{PV84} and~\cite{AB03}: results in~\cite{PV84} are stated for functions with one variable but since our weights are ``radial" (in the sense that $\varphi(|k|)$ for $k \in \Z^n$ depends only on $|k| \in \N$) those results applies as well, alternatively results of~\cite{AB03} are stated in any dimension and for general (not necessarily radial) weights. Finally, there are also recent results in~\cite{FNRS} which applies to both space based or Fourier based norms.

Given $v>0$, we consider the complex domain
\[ \T^n_v:=\{\theta=x+iy \in \C^n/\Z^n \; | \ |y| < v \} \]
and for any bounded function $g : \T^n_u \rightarrow \C$, we consider the sup-norm over the complex domain, as well as the sup-norm over the real domain
\[ |g|_v:=\sup_{\theta \in \T^n_v}|g(\theta)|, \quad |g|_0:=\sup_{\theta \in \T^n}|g(\theta)|. \]
Recall that a $C^1$ function $\tilde{f} : \T^n_v \rightarrow \C$ is analytic if and only if it is holomorphic, that is it satisfies a system of Cauchy-Riemann equations
\[ \bar{\partial}_l\tilde{f}(\theta):=1/2(\partial_{x_l}\tilde{f}(\theta)+i\partial_{y_l}\tilde{f}(\theta))=0, \quad 1 \leq l \leq n \]
for any $\theta=(x_1+iy_1,\dots, x_n+iy_n) \in \T^n_v$. Now a real-analytic function $f \in \mathcal{O}(\T^n)=\bigcap_{s>0}\mathcal{F}_s^{1}$ possesses a (unique) analytic extension $\tilde{f}$ to  $\T^n_v$ for some $v>0$: more precisely, one can choose any $v<s$ provided $f \in \mathcal{F}_s^{1}$. This is no longer true for $f \in \mathcal{F}_r^{a}$ with $0<a<1$, but a (non-unique) ``almost" analytic extension always exists (and one can take $v$ as large as we want, though only the case where $v$ is small will be of interest).

To state the result, we shall introduce the Young conjugate of the weight $\varphi_a(t)=t^a/a$, which is defined by
\begin{equation}\label{young}
\varphi_a^*(\xi):=\sup_{t\geq 0}\{\varphi_a(t)-\xi t\}, \quad \xi>  0.
\end{equation}
It is always finite since $\varphi_a(t)=o(t)$, convex, decreasing and we also have
\[ \varphi_a(t)=\inf_{\xi >0}\{\varphi_a^*(\xi)+\xi t\}, \quad t>0 \]
since $\varphi_a$ is concave. Moreover, for any $r>0$ we have
\begin{equation}\label{young2}
r\varphi_a^*(\xi/r)=\sup_{t\geq  0}\{r\varphi_a(t)-\xi t\}, \quad \xi>  0.
\end{equation}
In fact, the form of the Gevrey weight allows for a simple computation of $\varphi_a^*$: define the conjugate exponent $b$ by
\begin{equation}\label{exp}
 0<b:=a/(1-a)<+\infty, \quad 1/a-1/b=1
\end{equation}
then it is easy to check that 
\begin{equation}\label{young3}
\varphi_a^*(\xi)=\frac{1}{b\xi^b}=\varphi_b(1/\xi), \quad \xi>0.
\end{equation}
We have the following statement.

\begin{proposition}\label{dynkin}
Let $f \in \mathcal{F}_{r_0}^{a}$ and $0<r<r_0$. Then $f$ admits a $C^1$ extension $\tilde{f} :\T^n_{r_0} \rightarrow \C$ such that for any $0<v\leq r_0$,
\[ |\tilde{f}|_v \leq C_1||f||_{r_0}, \quad |\bar{\partial}_l\tilde{f}|_v \leq C_1||f||_{r_0}e^{-r\varphi_b(r_0/v)}, \quad 1 \leq l \leq n \]
with a positive constant $C_1=C_1(a,r,r_0,n)$. 
\end{proposition}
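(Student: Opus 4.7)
The plan is to construct $\tilde{f}$ as a Fourier series with an imaginary-part--dependent smooth cutoff, so that the cutoff is holomorphic up to an exponentially small error. Specifically, I would fix a smooth $\chi : [0,\infty) \to [0,1]$ with $\chi \equiv 1$ on $[0,1/2]$ and $\chi \equiv 0$ on $[1,\infty)$, and set
\[
\tilde{f}(x+iy) := \sum_{k \in \Z^n} f_k\, e^{2\pi i k \cdot (x+iy)}\, \chi\!\left(\frac{|k|}{N(|y|)}\right),
\]
where $N : (0, r_0] \to (0,\infty)$ is a non-increasing, smooth cutoff threshold to be calibrated below. The critical point is the choice of $N(|y|)$: it should balance the harmful growth factor $e^{2\pi |k||y|}$ against the Gevrey decay $|f_k| \leq \|f\|_{r_0}\, e^{-r_0\varphi_a(|k|)}$, so that at the boundary $|k| \asymp N(|y|)$ the product lies below the target $e^{-r\varphi_b(r_0/v)}$. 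The natural calibration, dictated by the Legendre duality $(2.2)$--$(2.3)$, is to select $N(|y|)$ so that $\varphi_a'(N(|y|))$ is of order $|y|/r_0$; equivalently, $N(|y|) \asymp (r_0/|y|)^{1/(1-a)}$. With this choice, for $|k| \leq N(|y|)$ one has
\[
2\pi |k||y| \;\leq\; r_0\,\varphi_a(|k|) \;+\; r_0\,\varphi_a^*(2\pi|y|/r_0) \;=\; r_0\,\varphi_a(|k|) \;+\; r_0\,\varphi_b(r_0/(2\pi|y|)),
\]
by the definition of the Young conjugate.

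Using this inequality I would first derive the uniform bound $|\tilde{f}|_v \leq C_1 \|f\|_{r_0}$: separating the contribution of the vanishing region from the summable tail, each term is controlled by $\|f\|_{r_0}\, e^{-(r_0-r)\varphi_a(|k|)}$ times a factor which, because of the cutoff's support, never sees the troublesome $e^{r_0\varphi_b(r_0/v)}$; summing over $k$ gives a convergent geometric-type series and a bound independent of $v \in (0, r_0]$. The loss from $r_0$ down to $r$ is used precisely here to absorb the factor $e^{r_0\varphi_b(r_0/v)}$ arising from Young's inequality.

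The $\bar{\partial}$ estimate is the crux. Because $e^{2\pi i k\cdot(x+iy)}$ is holomorphic, only the cutoff contributes, so
\[
\bar{\partial}_l \tilde{f}(x+iy) \;=\; \tfrac{i}{2} \sum_{k} f_k\, e^{2\pi i k \cdot (x+iy)}\, \chi'\!\left(\frac{|k|}{N(|y|)}\right)\,\frac{|k|}{N(|y|)^2}\,N'(|y|)\,\frac{y_l}{|y|}.
\]
The support of $\chi'$ restricts the sum to the band $|k| \in [N(|y|)/2, N(|y|)]$, and on that band the calibration forces
\[
|f_k|\, e^{2\pi|k||y|} \;\leq\; \|f\|_{r_0}\, e^{-(r_0-r)\varphi_a(|k|)}\, e^{-r \varphi_b(r_0/(2\pi|y|))}.
\]
Summing over the $O(N(|y|)^n)$ lattice points in the band and absorbing the polynomial pre-factors $N^n$, $|k|/N^2$, $|N'|$ into the slack $(r_0-r)\varphi_a(N)$ (which is a much larger positive quantity), one obtains the required $|\bar{\partial}_l \tilde{f}|_v \leq C_1 \|f\|_{r_0}\, e^{-r\varphi_b(r_0/v)}$ (after trivially replacing the $2\pi$ by $1$ inside $\varphi_b$ at the cost of enlarging $C_1$).

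The main obstacle is bookkeeping: matching the constants so that the exponent on the right is exactly $r\varphi_b(r_0/v)$ rather than some $r'\varphi_b(r_0/v)$ with $r' < r$. This demands that the calibration of $N(|y|)$ be sharp (dictated entirely by the Young conjugate) and that the polynomial losses from the lattice count, from $\chi'$, and from $N'(|y|)/N(|y|)^2$ be absorbed only by the Gevrey decay $e^{-(r_0-r)\varphi_a(|k|)}$ inside the band, leaving the $e^{-r\varphi_b(r_0/v)}$ factor untouched. An alternative I would keep in reserve is to invoke the abstract almost-analytic extension results of \cite{PV84} or \cite{AB03} (which apply to radial sub-additive weights in the $l_\infty$-Fourier setting) and translate them into the stated form, at the price of an arbitrarily small further loss in the regularity parameter $r_0 - r$; this would sidestep the explicit construction entirely.
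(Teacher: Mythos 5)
Your fallback route --- invoking Theorem 2.4 of [PV84] (or [AB03]) and using the decrement from $r_0$ to $r$ only to absorb a polynomially large factor $(r_0/v)^d$ --- is exactly the proof the paper gives, so that option is sound. The direct construction you put forward as your main argument, however, has genuine gaps, and they sit precisely at the point you defer as ``bookkeeping'': for this statement the entire content is the constant multiplying $\varphi_b(r_0/v)$ in the exponent, and your write-up does not deliver it. First, the parenthetical claim that one may ``trivially replace the $2\pi$ by $1$ inside $\varphi_b$ at the cost of enlarging $C_1$'' is false: $e^{-r\varphi_b(r_0/(2\pi v))}=e^{-(2\pi)^{-b}r\,\varphi_b(r_0/v)}$, so a constant inside the argument of $\varphi_b$ rescales the regularity parameter from $r$ to $(2\pi)^{-b}r$, and the ratio between the two bounds blows up as $v\to 0$; no multiplicative constant can compensate this (this is exactly the kind of exponent accounting the rest of the paper, e.g.\ the factors $32^{b}$, $8^{b}$ and the threshold $r>c\tau$, is sensitive to). Second, your derivation of the uniform bound is incorrect as written: the loss $r_0\to r$ cannot ``absorb the factor $e^{r_0\varphi_b(r_0/v)}$ arising from Young's inequality'', since that factor is unbounded as $v\to 0$ while $e^{-(r_0-r)\varphi_a(|k|)}$ is just a constant for bounded $|k|$. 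What actually gives $|\tilde f|_v\le C_1\|f\|_{r_0}$ is the constrained inequality $2\pi|k||y|\le (r_0-r'')\varphi_a(|k|)$, valid for $|k|\le N(|y|)$ only when the constant in the calibration of $N$ is suitably chosen; i.e.\ the sup bound is a constraint on that constant, not a consequence of the unconstrained conjugate inequality you state.

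Third, and most seriously, the $\bar\partial$ estimate you obtain is governed by $\max_{N/2\le t\le N}\bigl(2\pi t|y|-r_0\varphi_a(t)\bigr)$ (the maximum of a convex function, hence attained at an edge of the band where $\chi'$ lives), and if you optimize over the unspecified constant in $N(|y|)\asymp (r_0/|y|)^{1/(1-a)}$ you find that the coefficient of $\varphi_b(r_0/v)$ achievable with a fixed dyadic transition region $[1/2,1]$ is a constant strictly smaller than $r_0$ --- already in the normalization where width $v$ corresponds to Fourier decay $e^{-v|k|}$, the inner edge $t=N/2$ caps it at roughly $0.94\,r_0$ to $0.97\,r_0$, and keeping the $2\pi$ as you wrote it degrades this by a further factor of order $(2\pi)^{-b}$. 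So the construction as specified proves the stated bound only for $r$ below an explicit fraction of $r_0$, not for every $r<r_0$; the deficit is exponential (a smaller constant in front of $\varphi_b$), so it cannot be absorbed into the slack $(r_0-r)\varphi_a(|k|)$ the way polynomial prefactors can. To make the direct construction reach an arbitrary $r<r_0$ you would need both the sharp calibration (the constant in $N$ essentially equal to $r_0$ in the $2\pi$-free normalization) and a cutoff whose transition band $[\theta,1]$ has $\theta=\theta(r/r_0,a)\to 1$ as $r\to r_0$, so that both edges of the band yield an exponent close to $r_0\varphi_b$; neither appears in your proposal (and you should also smooth the dependence on $|y|$, or use the Euclidean norm, to get a genuine $C^1$ extension across $y=0$). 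Given these issues, either carry out that sharper calibration explicitly or simply take your ``reserve'' option, which is the paper's own argument.
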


This is a consequence of Theorem 2.4 of~\cite{PV84} (see also Theorem 2.2 and Theorem 4.1 of~\cite{AB03}). Actually, the quality of approximation is more precisely an exponentially small factor $e^{-r_0\varphi_b(r_0/v)}$ up to some polynomially large factor $(r_0/v)^d$ with $d=d(a,n)$; we simply decreased $r<r_0$ to absorb this last factor. 

Now for the sequence $v_j=2^{-j}v_0$, $j \in \N$ with $v_0 \leq r_0$, the proposition gives a sequence $\tilde{f}_j :\T^n_{v_j} \rightarrow \C$ which are almost analytic in the sense that $|\bar{\partial}_l\tilde{f}|_{v_j}$ for any $1 \leq l \leq n$ decreases to zero with a stretched exponential speed (in $v_j$) with exponent $b$; in view of~\eqref{exp} we have $b\rightarrow +\infty$ as $a \rightarrow 1$ which agrees with the fact that for $a=1$, the extension can be chosen so that $|\bar{\partial}_l\tilde{f}|_{v_j}$ is identically zero.  

Proposition~\ref{dynkin} constitutes one half of Popov's approximation lemma by real-analytic functions; once we have almost analytic extensions the Proposition 3.1 of~\cite{Pop04} (see also Proposition 2.1 of~\cite{PH16}) yields a sequence of real-analytic approximations $f_j$ with the same stretched exponential speed. However, the approximations $f_j$ have to be constructed on a domain $\T^n_{u_j}$ which is slightly smaller than the domain $\T^n_{v_j}$ on which one has estimates for $\tilde{f}_j$; it suffices to take $u_j<v_j$ (in~\cite{Pop04} the author chooses $u_j=v_j/2$ for simplicity) and so we use again our absorbing factor $r<r_0$ to choose $u_j=(r/r_0)v_j$. Proposition~\ref{dynkin}, together with Popov's approximation lemma, leads to the following statement.

\begin{proposition}\label{popov}
Let $f \in \mathcal{F}_{r_0}^{a}$, $0<u_0 \leq r<r_0$ and $u_j=2^{-j}u_0$ for $j \in \N$. There exists a sequence of real-analytic functions $f_j:\T^n_{u_j} \rightarrow \C$ such that 
\begin{equation*}
\begin{cases}
|f_0|_{u_0}\leq C_2||f||_{r_0}, \\ 
|f_{j+1}-f_{j}|_{u_{j+1}} \leq C_2||f||_{r_0}e^{-r\varphi_b(r/u_j)}, \\
|f_{j}-f|_{0} \leq C_2||f||_{r_0}e^{-r\varphi_b(r/u_j)}
\end{cases}
\end{equation*}
with a positive constant $C_2=C_2(a,r,r_0,n)$. 
\end{proposition}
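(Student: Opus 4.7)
The plan is to chain Proposition~\ref{dynkin} with Popov's $\bar\partial$-solving lemma (Proposition~3.1 of~\cite{Pop04}, or Proposition~2.1 of~\cite{PH16}) applied scale by scale. First I would apply Proposition~\ref{dynkin}, with its regularity parameters set to the given $r<r_0$, to produce a single almost analytic extension $\tilde f:\T^n_{r_0}\to\C$ of $f$ satisfying
\[ |\tilde f|_v\leq C_1\|f\|_{r_0}, \quad |\bar\partial_l\tilde f|_v\leq C_1\|f\|_{r_0}\exp\bigl(-r\,\varphi_b(r_0/v)\bigr),\quad 1\leq l\leq n, \]
uniformly on every complex strip $\T^n_v$ with $0<v\leq r_0$.

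The second step is to introduce the auxiliary radii $v_j:=(r_0/r)u_j$. Since $u_0\leq r<r_0$, one has $u_j<v_j\leq r_0$, so the estimates above apply on $\T^n_{v_j}$, and the crucial ratio identity $r_0/v_j=r/u_j$ ensures that the exponential rate takes the prescribed form $\exp(-r\varphi_b(r/u_j))$ rather than anything involving $r_0$. Then, for each $j\in\N$, I would apply Popov's $\bar\partial$-approximation lemma on the nested pair $\T^n_{u_j}\subset\T^n_{v_j}$: solving the inhomogeneous Cauchy-Riemann system $\bar\partial h_j=\bar\partial\tilde f$ on $\T^n_{v_j}$ by a Cauchy-Green-type integral operator, one obtains a holomorphic function $f_j:=\tilde f-h_j$ on $\T^n_{u_j}$ together with a bound of the form
\[ |f_j-\tilde f|_{u_j}\leq C\|f\|_{r_0}\exp\bigl(-r\,\varphi_b(r/u_j)\bigr). \]

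The three claimed estimates then follow by triangle inequalities. For (i): $|f_0|_{u_0}\leq |f_0-\tilde f|_{u_0}+|\tilde f|_{u_0}\leq C_2\|f\|_{r_0}$. For (ii): restricting to the real torus $\T^n\subset\T^n_{u_j}$ and using $\tilde f|_{\T^n}=f$ gives $|f_j-f|_0\leq|f_j-\tilde f|_{u_j}$. For (iii): since $u_{j+1}<u_j$, $f_j$ is also defined on $\T^n_{u_{j+1}}$ and
\[ |f_{j+1}-f_j|_{u_{j+1}}\leq |f_{j+1}-\tilde f|_{u_{j+1}}+|\tilde f-f_j|_{u_{j+1}}, \]
where the second term dominates since $\varphi_b$ is increasing and $r/u_{j+1}>r/u_j$.

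The main obstacle is in the second step, namely verifying that the explicit $\bar\partial$-solution operator transfers the stretched-exponential bound on $\bar\partial\tilde f$ to the approximation error $|f_j-\tilde f|_{u_j}$ with only a polynomial loss in shrinking from $\T^n_{v_j}$ to $\T^n_{u_j}$. This is precisely the content of Popov's lemma; the polynomial loss arises from the geometry of the Cauchy kernel and depends only on the fixed ratio $v_j/u_j=r_0/r>1$, so it is uniform in $j$. As in the absorption argument already used in Proposition~\ref{dynkin}, this polynomial factor is absorbed by a slight decrease of the constant in the exponent (the slack afforded by taking $r<r_0$), and consequently $C_2$ can be chosen independent of $j$. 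The remainder of the proof is then bookkeeping.
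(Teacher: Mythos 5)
Your proposal follows essentially the same route as the paper: Proposition~\ref{dynkin} to produce the almost analytic extension, then Popov's $\bar\partial$-approximation lemma applied on the nested strips with the ratio $v_j=(r_0/r)u_j$ so that $r\varphi_b(r_0/v_j)=r\varphi_b(r/u_j)$, with the slack $r<r_0$ absorbing the polynomial losses. The only imprecision is your claim that the Cauchy--Green loss is uniform in $j$ (it in fact grows polynomially in $1/u_j$); as you then say, it is absorbed into the stretched-exponential factor, for which one should apply Proposition~\ref{dynkin} with an intermediate parameter $r<r'<r_0$ so that genuine exponential slack remains after the ratio identity is used.
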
  

Finally, as in~\cite{PH16}, we shall prove that the above proposition admits a partial converse; we shall state it in a way adapted to the proof of Theorem~\ref{th1}.

\begin{proposition}\label{popov2}
Given $r>0$, consider for $j \in \N$ a geometric sequence $w_{j-1}=2^{-j}w_{-1}<r$ and a sequence of real-analytic functions $f^{j-1}:\T^n_{w_{j-1}} \rightarrow \C$ such that $f^{-1}=0$, and assume that
\[ |f^j-f^{j-1}|_{w_{j-1}} \leq e^{-\upsilon\varphi_b(r/w_{j-1})}, \quad w_{j-1}<r, \quad  j \in \N  \]
for some constant $0<\upsilon<r$. Then for any $0<\nu<\upsilon$, if 
\begin{equation}\label{seuilconv}
e^{-(\upsilon-\nu)(2^b-1)\varphi_b(r/w_{-1})} \leq 1/2
\end{equation}
the sequence $f^{j-1}$ converges in $\mathcal{F}^a_\nu$ and the limit $f$ satisfy
\[ ||f||_\nu \leq 2e^{-(\upsilon-\nu)\varphi_b(r/w_{-1})}.  \] 
\end{proposition}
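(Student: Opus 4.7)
The plan is to control each telescoping increment $f^j - f^{j-1}$ in the norm $||\cdot||_\nu$, and then sum the resulting series using the geometric scaling $w_{j-1} = 2^{-j} w_{-1}$ together with the $b$-homogeneity of $\varphi_b(t) = t^b/b$.

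First I would convert the sup-norm bound on the complex strip into Fourier decay: shifting contours in each coordinate direction gives
\[ |(f^j - f^{j-1})_k| \leq |f^j - f^{j-1}|_{w_{j-1}}\, e^{-2\pi w_{j-1}|k|} \leq e^{-\upsilon \varphi_b(r/w_{j-1})}\, e^{-2\pi w_{j-1}|k|} \]
for every $k \in \Z^n$. Next, and this is the conceptual core, I would feed this into Young duality for the conjugate pair $(\varphi_a, \varphi_a^*)$. By~\eqref{young2} and~\eqref{young3},
\[ \nu\varphi_a(|k|) - 2\pi w_{j-1}|k| \leq \nu\varphi_a^*(2\pi w_{j-1}/\nu) = \nu \varphi_b\bigl(\nu/(2\pi w_{j-1})\bigr); \]
since $\nu < \upsilon < r$ and $\varphi_b$ is increasing, the right-hand side is majorised by $\nu \varphi_b(r/w_{j-1})$. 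Combining with the Fourier decay yields the key increment estimate
\[ ||f^j - f^{j-1}||_\nu = \sup_{k} |(f^j - f^{j-1})_k|\, e^{\nu \varphi_a(|k|)} \leq e^{-(\upsilon - \nu)\varphi_b(r/w_{j-1})}. \]

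With this in hand, the homogeneity $\varphi_b(2^j t) = 2^{jb}\varphi_b(t)$ together with $w_{j-1} = 2^{-j} w_{-1}$ gives $\varphi_b(r/w_{j-1}) = 2^{jb}\varphi_b(r/w_{-1})$, so setting $A := (\upsilon - \nu)\varphi_b(r/w_{-1})$ one has $||f^j - f^{j-1}||_\nu \leq e^{-A\, 2^{jb}}$. The ratio of consecutive terms is $e^{-A\, 2^{jb}(2^b - 1)} \leq e^{-A(2^b - 1)} \leq 1/2$ by the threshold condition~\eqref{seuilconv}, hence the telescoping series $\sum_{j \geq 0}(f^j - f^{j-1})$ is bounded geometrically by $2 e^{-A}$ in $||\cdot||_\nu$. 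Since $f^{-1} = 0$ and $\mathcal{F}_\nu^a$ is Banach, the sequence $(f^{j-1})$ converges in $\mathcal{F}_\nu^a$ to a limit $f$ satisfying $||f||_\nu \leq 2 e^{-(\upsilon - \nu)\varphi_b(r/w_{-1})}$. The only mildly delicate point is matching the Fourier decay rate $2\pi w_{j-1}$ with the argument $r/w_{j-1}$ of $\varphi_b$ appearing in the hypothesis; this is handled by the slack $\nu < r$ and the monotonicity of $\varphi_b$, and everything else is a routine geometric summation tuned precisely by the choice of threshold~\eqref{seuilconv}.
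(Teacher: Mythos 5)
Your proposal is correct and follows essentially the same route as the paper: Cauchy-type Fourier decay from the strip estimates, Young duality via~\eqref{young2}--\eqref{young3} to get the increment bound $||f^j-f^{j-1}||_\nu \leq e^{-(\upsilon-\nu)\varphi_b(r/w_{j-1})}$, and then geometric summation in the Banach space $\mathcal{F}^a_\nu$ using the homogeneity of $\varphi_b$ and the threshold~\eqref{seuilconv}. The only cosmetic differences are that you keep the (harmless, in fact stronger) factor $2\pi$ in the Fourier decay where the paper drops it, and you spell out the ratio test that the paper leaves implicit.
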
  

\begin{proof}
Let us define $h^j=f^j-f^{j-1}$ and expand it into Fourier series $h^j=\sum_{k \in \Z^n}h^j_ke_k$: since $h^j$ is analytic we have
\[ |h_j^k| \leq |h_j|_{w_{j-1}}e^{-w_{j-1}|k|} \leq e^{-\upsilon\varphi_b(r/w_{j-1})}e^{-w_{j-1}|k|}   \] 
so that
\[ \sum_{j \geq 0}||h^j||_\nu=\sum_{j \geq 0}\sup_{k \in \Z^n}|h_k^j|e^{\nu\varphi_a(|k|)} \leq \sum_{j \geq 0} e^{-\upsilon\varphi_b(r/w_{j-1})} \sup_{t \geq 0}\{ e^{\nu\varphi_a(t)-w_{j-1}t}\} \]
therefore from~\eqref{young2} and~\eqref{young3}, we have
\[ \sum_{j \geq 0}||h^j||_\nu \leq \sum_{j \geq 0} e^{-\upsilon\varphi_b(r/w_{j-1})}e^{\nu\varphi_b(\nu/w_{j-1})} \leq  \sum_{j \geq 0} e^{-(\upsilon-\nu)\varphi_b(r/w_{j-1})} < 2e^{-(\upsilon-\nu)\varphi_b(r/w_{-1})} \]
where we used the fact that $\varphi_b(\nu/w_{j-1})\leq \varphi_b(r/w_{j-1})$ since $0<\nu<\upsilon<r$ and $r>w_{j-1}$, and~\eqref{seuilconv} which allows to bound the last series by a geometric series. It follows that the sum of the $h^j$ converges normally in $\mathcal{F}^a_\nu$, and since the latter is a Banach space, the sum converges to some $f$ which is necessarily the limit of $f^{j-1}$ and from the last inequality and the fact that $f^{-1}=0$, at the limit one has the wanted estimate.
\end{proof}

\subsection{Analytic KAM theorem, following R\"{u}ssmann}\label{s43}

In this section, we shall state an analytic KAM theorem adapted to a frequency $\omega \in \mathcal{A}_{\tau_0}^a$, which by definition (recall~\eqref{ari}) satisfies the inequalities
\[ |2\pi k\cdot \omega| \geq \gamma_{\tau_0}(\omega)e^{-\tau_0\varphi_a(|k|)}, \quad k \in \Z^n \setminus\{0\} \]
that can be written as
\begin{equation}\label{arit}
|k\cdot \omega| \geq \frac{\alpha}{\Delta(|k|)}, \quad \Delta(t):=e^{\tau_0\varphi_a(t)}, \quad \alpha:=\gamma_{\tau_0}(\omega)/2\pi, \quad k \in \Z^n \setminus\{0\}. 
\end{equation}
Vectors which satisfy~\eqref{arit} are clearly Bruno vectors, hence the results of R\"{u}ssmann (see~\cite{Rus94},~\cite{Rus01}, see also~\cite{Pos11} and~\cite{BF12}) apply. However those results apply to any Bruno vector $\omega$ and as we will explain below, for the special vectors satisfying~\eqref{arit} they do not give the best quantitative result\footnote{One should point out that, unlike~\cite{Rus01} for instance, the threshold of the main theorem in~\cite{Rus94} is not correct: this comes from a slight mistake in Lemma 3.2 in that reference.}. The difference is that in order to reach a statement valid for all Bruno vectors, it seems that one has to avoid using a ``superlinear" scheme of convergence (Newton method) but rather use a scheme whose ``speed" depends on the arithmetic property of $\omega$. Now for vectors satisfying~\eqref{arit} (or more generally for the set of vectors $\mathcal{B}^1$ we mentioned in~\S\ref{s24} and~\S\ref{s44}), a superlinear scheme is possible and does give better quantitative result. Such an analytic KAM theorem with a superlinear scheme is contained in~\cite{Rus80} where the sup-norm is used and a variant of this scheme (in a much more general setting) is contained in~\cite{Pos89} where the Fourier $l_1$-norm is used; it turns out that using 
the Fourier norm, and consequently the results in~\cite{Pos89}, are much more practical (this will also be explained below).

So to state the main result of~\cite{Pos89} (in the simple setting we are considering), recall that $\Delta$ has been defined in~\eqref{arit}, we now define
\begin{equation}\label{gamma}
\Gamma(\sigma):=\sup_{t\geq 0}\{(1+t)\Delta(t)e^{-\sigma t}\}, \quad \sigma>0
\end{equation}
and for a given $1<\kappa<2$, we set
\begin{equation}\label{psi}
\Psi_\kappa(\sigma):=\inf \prod_{j=0}^{+\infty}\Gamma(\sigma_j)^{\kappa_j}, \quad \kappa_j:=(\kappa-1)\kappa^{-(j+1)}
\end{equation}
where the infimum (which can be shown to be a minimum) is taken over all non-increasing sequences $(\sigma_j)_{j  \in \N}$ whose sum is less or equal than $\sigma$. The fact that~\eqref{gamma} and~\eqref{psi} are indeed finite will be verified later; we shall actually need explicit estimates for them. Given $s>0$ and $f : \T^n \rightarrow \R$ we recall that
\[  ||f||_s^1:=\sum_{k \in \Z^n}|f_k|e^{s|k|}\]
and the assumption that $||f||_s^1<+\infty$ implies that $f$ extends to a holomorphic function on $\T^n_s$. We then extend the definition of the above norm for vector fields on $\T^n$ by taking the maximum norm of each components. Here's the analytic KAM theorem we shall rely on.

\begin{proposition}\label{KAM1}
Let $0<a<1$, $1<\kappa<2$, $0<2\sigma<s$ and for $X=X_\omega+F$ with $\omega$ satisfying~\eqref{arit}, assume that
\[ C_4\alpha^{-1}\Psi_\kappa(\sigma) ||F||_s^1 \leq 1   \]
for some positive numerical constant $C_4$. Then there exists $\lambda \in \R^n$ and $\Phi : \T^n_{s-2\sigma} \rightarrow \T^n_s$ such that 
\begin{equation}\label{mod}
\Phi^*(X-\lambda)=X_\omega
\end{equation}
with the estimates
\[ |\lambda| \leq C_4||F||_s^1, \quad  \max\{||\Phi-\mathrm{Id}||_{s-2\sigma}^1,||D\Phi-\mathrm{Id}||_{s-2\sigma}^1\} \leq C_4\alpha^{-1}\Psi_\kappa(\sigma) ||F||_s^1.   \]
\end{proposition}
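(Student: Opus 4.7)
The plan is to prove Proposition~\ref{KAM1} by a standard KAM iteration with superlinear convergence of order $\kappa\in(1,2)$, in the spirit of~\cite{Pos89}. I would inductively produce analytic diffeomorphisms $\Phi_j:\T^n_{s_{j+1}}\to\T^n_{s_j}$ and modifying terms $\lambda_j\in\R^n$, with $s_0=s$ and $s_{j+1}=s_j-2\sigma_j$ for some non-increasing sequence $(\sigma_j)$ satisfying $\sum_j\sigma_j\le\sigma$, in such a way that the partial composition $\Psi_j=\Phi_0\circ\cdots\circ\Phi_j$ satisfies $\Psi_j^*(X-\Lambda_j)=X_\omega+F_{j+1}$ with $\Lambda_j=\lambda_0+\cdots+\lambda_j$ and with $\varepsilon_{j+1}:=\|F_{j+1}\|_{s_{j+1}}^1$ decreasing super-geometrically to zero. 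Passing to the limit $s_\infty\ge s-2\sigma$ will then yield the desired $\Phi$ and $\lambda$.

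For one KAM step at analyticity width $s'$, given $X=X_\omega+F$ with $\varepsilon:=\|F\|_{s'}^1$ small, I split $F=\bar F+\tilde F$ into its mean and zero-mean parts and set $\lambda:=\bar F$. Solving the cohomological equation $\partial_\omega U=-\tilde F$ term by term in Fourier and invoking the small divisor bound~\eqref{arit}, one gets for any $0<\sigma'<s'$
\begin{equation*}
\|U\|_{s'-\sigma'}^1+\|DU\|_{s'-\sigma'}^1\le\alpha^{-1}\sup_{t\ge0}\bigl\{(1+t)\Delta(t)e^{-\sigma' t}\bigr\}\|\tilde F\|_{s'}^1=\alpha^{-1}\Gamma(\sigma')\,\varepsilon,
\end{equation*}
which is precisely how the quantity $\Gamma(\sigma')$ of~\eqref{gamma} enters. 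Defining $\Phi:=\mathrm{Id}+U$ and expanding $\Phi^*(X-\lambda)$, the linear term $\tilde F+\partial_\omega U$ vanishes by construction and there remains a quadratic remainder $F^+$ built from $(F\circ\Phi-F)$, $DU\,(F-\lambda)$ and $(\mathrm{Id}+DU)^{-1}$; the Banach-algebra property of $\|\cdot\|_{s'-2\sigma'}^1$, together with a Neumann series and a mean-value estimate in the complex strip, yields $\|F^+\|_{s'-2\sigma'}^1\le C_4\alpha^{-1}\Gamma(\sigma')\,\varepsilon^\kappa$, any $\kappa<2$ being admissible by absorbing a factor $\varepsilon^{2-\kappa}$ coming from the genuine quadratic bound.

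Unrolling the recurrence $\varepsilon_{j+1}\le C_j\varepsilon_j^\kappa$ with $C_j:=C_4\alpha^{-1}\Gamma(\sigma_j)$ gives $\kappa^{-j}\log\varepsilon_j=\log\varepsilon_0+(\kappa-1)^{-1}\sum_{i<j}\kappa_i\log C_i$ with $\kappa_i=(\kappa-1)\kappa^{-(i+1)}$ summing to one, hence $\varepsilon_j\lesssim\bigl(\varepsilon_0\cdot Q^{1/(\kappa-1)}\bigr)^{\kappa^j}$ with $Q=\prod_i C_i^{\kappa_i}=(C_4\alpha^{-1})\prod_i\Gamma(\sigma_i)^{\kappa_i}$. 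Since the sequence $(\sigma_j)$ is at our disposal subject only to $\sum_j\sigma_j\le\sigma$, minimising over such sequences produces precisely $\Psi_\kappa(\sigma)$ from~\eqref{psi}, so the hypothesis $C_4\alpha^{-1}\Psi_\kappa(\sigma)\varepsilon_0\le1$ forces super-geometric decay of $\varepsilon_j$; the series $\sum_j|\lambda_j|$ is dominated by $|\lambda_0|\le\varepsilon_0$, and the series $\sum_j\alpha^{-1}\Gamma(\sigma_j)\varepsilon_j$, controlling $\|\Phi-\mathrm{Id}\|^1$ and $\|D\Phi-\mathrm{Id}\|^1$ by telescoping, is dominated by its first term $\alpha^{-1}\Gamma(\sigma_0)\varepsilon_0\lesssim\alpha^{-1}\Psi_\kappa(\sigma)\varepsilon_0$. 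The main technical obstacle is the precise accounting of the small-divisor factor $\Gamma(\sigma_j)$ against the finite budget $\sigma=\sum_j\sigma_j$ for width losses, and the verification that all estimates close at each iteration in the inherited width $s_{j+1}$; the definition~\eqref{psi} of $\Psi_\kappa$ is engineered precisely for this optimisation, providing a single universal smallness threshold independent of how one concretely splits $\sigma$ among the iterates.
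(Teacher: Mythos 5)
Your overall architecture (one KAM step solving the cohomological equation with loss $\sigma'$, the factor $\Gamma(\sigma')$ of~\eqref{gamma} entering through the small divisors~\eqref{arit} plus one derivative, superlinear iteration, optimization over the width budget giving $\Psi_\kappa$ of~\eqref{psi}) is indeed the mechanism behind the result, but note first that the paper does not reprove it: Proposition~\ref{KAM1} is obtained there by specializing Theorem A of~\cite{Pos89} (Hamiltonian linear in the actions, no elliptic variables, frequency set reduced to a point), so you are reconstructing P\"oschel's proof. The genuine gap in your reconstruction is the convergence bookkeeping. Once you weaken the true quadratic step bound $\varepsilon_{j+1}\le C_4\alpha^{-1}\Gamma(\sigma_j)\varepsilon_j^{2}$ to $\varepsilon_{j+1}\le C_j\varepsilon_j^{\kappa}$ ``by absorbing $\varepsilon_j^{2-\kappa}$'', your own unrolled estimate $\varepsilon_j\lesssim\bigl(\varepsilon_0\,Q^{1/(\kappa-1)}\bigr)^{\kappa^j}$ with $Q=C_4\alpha^{-1}\prod_i\Gamma(\sigma_i)^{\kappa_i}$ requires $\varepsilon_0\,Q^{1/(\kappa-1)}<1$, i.e. $\varepsilon_0^{\kappa-1}C_4\alpha^{-1}\Psi_\kappa(\sigma)\lesssim 1$. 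In the relevant regime $C_4\alpha^{-1}\Psi_\kappa(\sigma)\ge 1$ this is strictly stronger than the stated hypothesis $C_4\alpha^{-1}\Psi_\kappa(\sigma)\varepsilon_0\le 1$, so the claim that the hypothesis ``forces super-geometric decay'' does not follow from your formula; as written you only obtain the proposition with a threshold that is not linear in $\|F\|_s^1$, which would propagate into a different constant $c(a,\kappa)$ in Theorem~\ref{th1}. The repair is to keep the quadratic gain: run the induction with the natural weights $2^{-(j+1)}$, so that convergence needs $\varepsilon_0\,C_4\alpha^{-1}\prod_j\Gamma(\sigma_j)^{2^{-(j+1)}}<1$, and then check (Abel summation, using that $\Gamma(\sigma_j)$ is nondecreasing in $j$ while the partial sums of the $\kappa_j$ are smaller than those of $2^{-(j+1)}$ when $\kappa<2$, together with $\kappa^{-j}\log\Gamma(\sigma_j)\to 0$ for the admissible sequences) that $\prod_j\Gamma(\sigma_j)^{2^{-(j+1)}}\le\prod_j\Gamma(\sigma_j)^{\kappa_j}$. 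In other words, in~\cite{Pos89} the parameter $\kappa<2$ is slack in the smallness condition, not the convergence order one actually settles for; trading quadraticity for $\kappa$ is exactly what costs you the exponent $1/(\kappa-1)$.

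A secondary, standard but non-negligible point you gloss over is the handling of the modifying terms under composition: a constant $\lambda_{j+1}$ subtracted in the $j$-th coordinates pulls back to $(D\Phi_0\circ\cdots\circ D\Phi_j)^{-1}\lambda_{j+1}$, which is no longer constant, so the identity $(\Phi_0\circ\cdots\circ\Phi_j)^*(X-\lambda_0-\cdots-\lambda_j)=X_\omega+F_{j+1}$ is not what the scheme produces. One must either allow modifying terms of the form $\Theta^*\lambda$ along the iteration (Moser's device, which is precisely what the paper builds into Proposition~\ref{KAM2}) or absorb the discrepancy into the new error term; likewise the composition and Cauchy estimates behind your quadratic remainder impose auxiliary smallness conditions of the type $\alpha^{-1}\Gamma(\sigma_j)\varepsilon_j\le\sigma_j$ that must be verified to follow from the same single hypothesis. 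These are fixable, but together with the threshold issue above your sketch does not yet yield Proposition~\ref{KAM1} as stated, whereas the paper obtains it verbatim from Theorem A of~\cite{Pos89}.
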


This is a direct consequence of Theorem A and the Estimates of Theorem A contained in~\cite{Pos89}, in the very special case where the Hamiltonian is linear with respect to the action variables, there are no elliptic variables (one can put $M=0$ in the above reference) and the Cantor set of frequencies is reduced to a single point. We shall now make several modifications to this result in order to have a statement which will be more convenient in this sequel.

First we shall actually use Proposition~\ref{KAM1} in the case where $s$, and thus $\sigma<s/2$, is a small parameter and in this case, we shall obtain explicit estimates for $\Gamma(\sigma)$ in~\eqref{gamma} and $\Psi_\kappa(\sigma)$ in~\eqref{psi}. Observe that
\[ \Gamma(\sigma)=\sup_{t\geq 0}\{(1+t)e^{\tau_0\varphi_a(t)-\sigma t}\}\]
and the supremum is reached at a value $t_\sigma \rightarrow +\infty$ as $\sigma \rightarrow 0$ and that within this limit, the polynomially large factor $(1+t)$ is dominated by the exponentially large factor $e^{\tau_0\varphi_a(t)}$. Hence given $\tau>\tau_0$, there exists $s^*=s^*(a,\tau,\tau_0)>0$ such that for $0<2\sigma < s \leq s^*$, we have
\[ \Gamma(\sigma)\leq \sup_{t\geq 0}\{e^{\tau\varphi_a(t)-\sigma t}\} \]
and recalling the definition of $\varphi_a^*$ in~\eqref{young}, together with the relations~\eqref{young2} and~\eqref{young3}, this gives
\[ \Gamma(\sigma) \leq e^{\tau\varphi_b(\tau/\sigma)}.  \]
If we had used the sup-norm (as in~\cite{Rus80}) instead of the Fourier $l_1$-norm, the supremum in the definition of $\Gamma(\sigma)$ in~\eqref{gamma} would have to be replaced by a sum (or an integral) and the computations in this case are less explicit: this is the reason why we choose the Fourier $l_1$-norm (sup-norms will be converted into Fourier $l_1$-norms below). Next we shall estimate~$\Psi_\kappa(\sigma)$ in~\eqref{psi}, and actually, we have nothing to do since it is explicitly done in Lemma 6 in~\cite{Pos89}: if we optimize among geometric sequence one is lead to consider
\[\sigma_j:=\sigma(\kappa^{1-a}-1)\kappa^{-(1-a)(j+1)}, \quad j \in \N \] 
which gives the estimate
\begin{equation}\label{Psi}
\Psi_\kappa(\sigma) \leq e^{\delta\tau\varphi_b(\tau/\sigma)}, \quad \delta=\delta(a,\kappa):=(\kappa-1)(\kappa^{1-a}-1)^{-1/(1-a)}. 
\end{equation}
Observe that as $a \rightarrow 1$, then $\delta \rightarrow +\infty$, and as $a \rightarrow 0$, then $\delta \rightarrow 1$ which is the best one can expect: if we had used results valid for any Bruno vectors such as~\cite{Rus01}, we would have find another constant $\delta'$ for which $\delta' \rightarrow +\infty$ as $a \rightarrow 1$ but also as $a \rightarrow 0$, which is clearly not natural.   

Next we convert sup-norms into Fourier $l_1$-norms by using the well-known relations 
\[ |f|_s \leq ||f||_s^1, \quad  ||f||_{s-2\sigma}^1\leq \mathrm{coth}^n \sigma |f|_{s}, \quad 0<2\sigma <s. \]
Using again $\tau>\tau_0$, the polynomial large factor $\mathrm{coth}^n \sigma$ can be absorbed by the exponential large factor in~\eqref{Psi} (strictly speaking, we should introduced yet another $\tau'>\tau>\tau_0$ but clearly one can replace $\tau$ by $\tau_0+(\tau-\tau_0)/2$ and then take $\tau'=\tau$) and it follows that Proposition~\ref{KAM1} holds true with~\eqref{Psi} if we replace the Fourier $l_1$-norm by the sup-norm, require $0<4\sigma<s$ instead of $0<2\sigma<s$ and allow the constant $C_4$ to depends now on $\tau,\tau_0, a$, $\kappa$ and $n$.

Finally, as observed by Moser in~\cite{Mos66}, the modifying term $\lambda$ in~\eqref{mod} need not be constant and it can be replaced by a non-constant modifying term of the form $\Theta^*\lambda$, that is 
\[ \Theta^*\lambda(x):=(D_x \Theta)^{-1}\lambda \]
with
\begin{equation}\label{mod2}
\Theta : \T^n_s \rightarrow \C^n/\Z^n, \quad |D\Theta-\mathrm{Id}|_s \leq 1/3.
\end{equation}
Indeed, the last inequality implies $|(D\Theta)^{-1}-\mathrm{Id}|_s \leq 1/2$  and therefore, for any $\lambda \in \R^n$, we have
\[ |\lambda|/2 \leq |\Phi^*\lambda|_s \leq 3|\lambda|/2. \]
We refer to~\cite{Mos66} for the reduction of this seemingly more general statement to the case where $\Theta=\mathrm{Id}$; clearly this only changes $C_4$ by a numerical factor. With all those modifications, we can finally state a more convenient version of Proposition~\ref{KAM1}.

\begin{proposition}\label{KAM2}
Let $0<a<1$, $\tau>\tau_0>0$, $1<\kappa<2$. There exists $s^*=s^*(a,\tau,\tau_0)$ such that for any $0<4\sigma<s \leq s^*$, the following holds true. Given $X=X_\omega+F$ with $\omega$ satisfying~\eqref{arit} and $\Theta$ satisfying~\eqref{mod2}, if we assume that
\begin{equation}\label{seuil}
C_4 \alpha^{-1} e^{\delta\tau\varphi_b(\tau/\sigma)}|F|_s \leq 1, \quad \delta=(\kappa-1)(\kappa^{1-a}-1)^{-1/(1-a)}   
\end{equation}
for some positive constant $C_4=C_4(a,\kappa,\tau,\tau_0,n)$, then there exist $\lambda \in \R^n$ and $\Phi : \T^n_{s-4\sigma} \rightarrow \T^n_s$ such that 
\begin{equation*}
\Phi^*(X-\Theta^*\lambda)=X_\omega
\end{equation*}
with the estimates and
\[ |\lambda| \leq C_4|F|_s, \quad \max\{|\Phi-\mathrm{Id}|_{s-4\sigma},|D\Phi-\mathrm{Id}|_{s-4\sigma}\} \leq C_4\alpha^{-1}|F|_s e^{\delta\tau\varphi_b(\tau/\sigma)}.   \]
\end{proposition}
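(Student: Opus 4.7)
The proposition is a convenient reformulation of Proposition~\ref{KAM1}, so my plan is to carry out four reductions in sequence, each of which is essentially algebraic once the structure of the Gevrey weight is exploited.

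First, I would bound $\Gamma(\sigma)$ explicitly for small $\sigma$. Writing $\Gamma(\sigma)=\sup_{t\geq 0}\{(1+t)e^{\tau_{0}\varphi_{a}(t)-\sigma t}\}$, the supremum is attained at some $t_{\sigma}$ with $t_{\sigma}\to+\infty$ as $\sigma\to 0^{+}$, since $\varphi_{a}(t)=t^{a}/a$ is concave and sublinear. Consequently, for $\sigma$ small enough the polynomial prefactor $(1+t)$ is dominated by $e^{(\tau-\tau_{0})\varphi_{a}(t)}$ for any fixed $\tau>\tau_{0}$; this is where the threshold $s^{*}=s^{*}(a,\tau,\tau_{0})$ enters. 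Then Young duality together with the explicit formula $\varphi_{a}^{*}(\xi)=\varphi_{b}(1/\xi)$ from~\eqref{young3} gives $\Gamma(\sigma)\leq e^{\tau\varphi_{b}(\tau/\sigma)}$.

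Second, I would estimate $\Psi_{\kappa}(\sigma)$ by testing the infimum in~\eqref{psi} against the geometric sequence $\sigma_{j}=\sigma(\kappa^{1-a}-1)\kappa^{-(1-a)(j+1)}$, whose sum equals $\sigma$. Plugging the bound on $\Gamma(\sigma_{j})$ from the previous step into $\prod_{j}\Gamma(\sigma_{j})^{\kappa_{j}}$ reduces the estimate to summing a geometric-type series in $j$, and one reads off $\Psi_{\kappa}(\sigma)\leq e^{\delta\tau\varphi_{b}(\tau/\sigma)}$ with $\delta$ as in~\eqref{Psi}; this is Lemma~6 of~\cite{Pos89} in our notation.

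Third, I would move from the Fourier $l_{1}$-norm in Proposition~\ref{KAM1} to the sup-norm via $|f|_{s}\leq\|f\|_{s}^{1}$ and $\|f\|_{s-2\sigma}^{1}\leq\coth^{n}(\sigma)\,|f|_{s}$. The factor $\coth^{n}\sigma$ is polynomial in $1/\sigma$ and can be absorbed into the exponential $e^{\delta\tau\varphi_{b}(\tau/\sigma)}$ after splitting the arithmetic exponent $\tau$ as $\tau_{0}+(\tau-\tau_{0})/2+(\tau-\tau_{0})/2$, at the cost of doubling the width used in the conversion (hence the passage from $2\sigma<s$ to $4\sigma<s$) and enlarging the constant $C_{4}$ to one depending on $a,\kappa,\tau,\tau_{0},n$. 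Finally, to allow the non-constant modifying term $\Theta^{*}\lambda$ under~\eqref{mod2}, I would follow Moser's observation in~\cite{Mos66}: the bound $|D\Theta-\mathrm{Id}|_{s}\leq 1/3$ implies $|(D\Theta)^{-1}-\mathrm{Id}|_{s}\leq 1/2$, so $|\Theta^{*}\lambda|_{s}$ is trapped between $|\lambda|/2$ and $3|\lambda|/2$, and the argument of~\cite{Mos66} reduces this case to the constant $\lambda$ case at the cost of a numerical constant in $C_{4}$. The main delicate point is the second step: one has to verify carefully that the geometric sequence $(\sigma_{j})$ is indeed the (essentially) optimizer and that the constants collapse to the clean expression for $\delta$, with the correct limits $\delta\to 1$ as $a\to 0^{+}$ and $\delta\to+\infty$ as $a\to 1^{-}$.
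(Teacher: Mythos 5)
Your plan reproduces the paper's own derivation of Proposition~\ref{KAM2} from Proposition~\ref{KAM1} step by step: the small-$\sigma$ bound $\Gamma(\sigma)\leq e^{\tau\varphi_b(\tau/\sigma)}$ via Young duality and the threshold $s^*(a,\tau,\tau_0)$, the estimate $\Psi_\kappa(\sigma)\leq e^{\delta\tau\varphi_b(\tau/\sigma)}$ via the geometric sequence of Lemma~6 of~\cite{Pos89}, the $l_1$/sup-norm conversion absorbing $\coth^n\sigma$ by exploiting $\tau>\tau_0$ (whence $4\sigma<s$ and the enlarged $C_4$), and Moser's reduction of the modifying term $\Theta^*\lambda$ to the constant case under~\eqref{mod2}. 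One small simplification: since $\Psi_\kappa$ is an infimum, the particular geometric choice of $(\sigma_j)$ need only be admissible, not (essentially) optimal, so no optimality check is required.
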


\subsection{Proof of Theorem~\ref{th1}}\label{s44}

This section is entirely devoted to the proof of Theorem~\ref{th1}. We recall that we are given $0<a<1$, $r_0>r>0$, $\tau>\tau_0>0$, $1<\kappa<2$, such that
\begin{equation}\label{assum}
r>c\tau, \quad c=c(a,\kappa):=32^b\delta=32^b(\kappa-1)(\kappa^{1-a}-1)^{-1/(1-a)}.
\end{equation}
Our assumption is that $\omega \in \mathcal{A}_{\tau_0}^a$ and
\[||F||_{r_0}:=\varepsilon_0\]
will be required to be sufficiently small. We now define $u_0=u_0(\varepsilon_0)>0$ by the equality
\begin{equation}\label{u_0}
e^{-r\varphi_b(r/(2u_0))}:=C_2C_4\varepsilon_0/\alpha \leq 1/2
\end{equation}  
where $\alpha=\gamma_{\tau_0}(\omega)/2\pi$ and $C_2$ and $C_4$ are the constants appearing in Proposition~\ref{popov} and Proposition~\ref{KAM2} respectively. Then we set
\begin{equation}\label{s_0}
\sigma_0=s_0/8=u_0/16.
\end{equation}
and we define geometric sequences converging to zero
\begin{equation}\label{suites}
\sigma_j:=2^{-j}\sigma_0, \quad s_j:=2^{-j}s_0, \quad u_j:=2^{-j}u_0. 
\end{equation}
We already assumed that $\varepsilon_0$ is small enough so that~\eqref{u_0} is less than $1/2$ (we will require much more than that in the sequel), and in view of~\eqref{s_0}, any further smallness condition on $\varepsilon_0$ is equivalent to a smallness condition on $u_0$ or $\sigma_0$ or $s_0$. Thus we may assume $2u_0<r$ so in particular Proposition~\ref{popov} applies, with the sequence $u_j$ defined in~\eqref{suites}, to each components of $F$ and yields a sequence of analytic vector fields $F_j:\T^n_{u_j} \rightarrow \C$ such that 
\begin{equation}\label{lissage}
\begin{cases}
|F_0|_{u_0}\leq C_2\varepsilon_0, \\
|F_{j+1}-F_{j}|_{u_{j+1}} \leq C_2\varepsilon_0e^{-r\varphi_b(r/u_j)}, \\
|F_{j}-F|_{0} \leq C_2\varepsilon_0e^{-r\varphi_b(r/u_j)}.
\end{cases}
\end{equation}
We wish to apply Proposition~\ref{KAM2} to the analytic vector field $X_0=X_\omega+F_0$ on the domain $\T^n_{s_0}$, which makes sense since $F_0$ is defined on $\T^n_{u_0}$ and $s_0<u_0$, and with $\Theta=\mathrm{Id}$. To do so, first observe that $s_0 \leq s^*$ is yet another smallness condition, $4\sigma_0=s_0/2<s_0$ so that we only need to verify~\eqref{seuil} which is implied by
\begin{equation}\label{averif}
e^{\delta\tau\varphi_b(\tau/\sigma_0)}C_2C_4\varepsilon_0/\alpha=e^{32^b\delta\tau\varphi_b(\tau/2u_0)}e^{-r\varphi_b(r/(2u_0))} \leq e^{-(r-c\tau)\varphi_b(r/(2u_0))}\leq 1
\end{equation}
where we used the equality in~\eqref{u_0}, the fact that $\sigma_0=u_0/16=2u_0/32$, $2u_0<r$ and the definition of $c=32^b\delta$. But our assumption~\eqref{assum} is precisely that $r>c\tau$, therefore the last inequality in~\eqref{averif} holds true because of the inequality in~\eqref{u_0}, consequently Proposition~\ref{KAM2} applies and, since $s_0-4\sigma_0=s_1$, we obtain $\lambda^0 \in \R^n$ and $\Phi^0 : \T^n_{s_1} \rightarrow \T^n_{s_0}$ such that 
\begin{equation*}
(\Phi^0)^*(X_0-\lambda^0)=X_\omega
\end{equation*}
and setting $\rho=r-c\tau>0$, we have the estimates
\[ |\lambda^0| \leq \alpha e^{-r\varphi_b(r/(2u_0))}, \quad  \max\{|\Phi^0-\mathrm{Id}|_{s_1},|D\Phi^0-\mathrm{Id}|_{s_1}\} \leq e^{-\rho\varphi_b(r/(2u_0))}.   \]
Moreover, since $s_0<u_1$, we have $\Phi^0 : \T^n_{s_1} \rightarrow \T^n_{u_1}$.

\textit{Claim}. Set $\lambda^{-1}=0\in \R^n$, $\Phi^{-1}=\mathrm{Id}$ and $u_{-1}=2u_0$. We claim that for any $j \in \N$, there exists $\Phi^j : \T^n_{s_{j+1}} \rightarrow \T^n_{u_{j+1}}$ and $\lambda^{j} \in \R^n$ such that for $X_j=X_\omega+F_j$, we have
\begin{equation}\label{limit}
(\Phi^j)^*(X_j-\lambda^j)=X_\omega
\end{equation}
and
\begin{equation}\label{iter}
\begin{cases}
|\lambda^j-\lambda^{j-1}| \leq \alpha e^{-r\varphi_b(r/u_{j-1})}, \\  \max\{|\Phi^j-\Phi^{j-1}|_{s_{j+1}},|D\Phi^j-D\Phi^{j-1}|_{s_{j+1}}\} \leq e^{-\rho\varphi_b(r/u_{j-1})}. 
\end{cases}
\end{equation}
\begin{proof}[Proof of the claim]
We have already shown the claim to be true for $j=0$, so we may proceed by induction and we assume the statement to holds true for $0 \leq i \leq j$ with $j \in \N$, and we need to show that it remains true for $j+1$. First observe that from~\eqref{iter}, for $u_0$ small enough one has
\begin{equation}\label{dist}
\max\{|\Phi^j-\mathrm{Id}|_{s_{j+1}},|D\Phi^j-\mathrm{Id}|_{s_{j+1}}\} \leq 2e^{-\rho\varphi_b(r/(2u_0))}\leq 1/3. 
\end{equation} 
Then we write
\[ X_{j+1}=X_{\omega}+F_{j+1}=X_j+(F_{j+1}-F_j) \]
and we apply our inductive assumption to get
\begin{equation}\label{vec}
(\Phi^j)^*(X_{j+1}-\lambda^j)=(\Phi^j)^*(X_j-\lambda^j)+(\Phi^j)^*(F_{j+1}-F_j)=X_\omega+G_j 
\end{equation}
where $G_j=(\Phi^j)^*(F_{j+1}-F_j)$ is well-defined since $\Phi^j$ maps $\T^n_{s_{j+1}}$ into $\T^n_{u_{j+1}}$ which is the domain of definition of $F_{j+1}-F_j$; it follows from~\eqref{dist} and~\eqref{lissage} that
\begin{equation}\label{G}
 |G_j|_{s_{j+1}}\leq |D\Phi^j|_{s_{j+1}}|F_{j+1}-F_j|_{u_{j+1}} \leq 2C_2\varepsilon_0 e^{-r\varphi_b(r/u_j)}. 
\end{equation} 
We wish to apply Proposition~\ref{KAM2} to the vector field~\eqref{vec}, with $\Theta=\Phi^j$: it follows from~\eqref{dist} that~\eqref{mod2} holds true, obviously $s_{j+1} \leq s^*$ since this is the case for $j=-1$. Also, in view of~\eqref{G}, to verify~\eqref{seuil} one needs to check that
\begin{equation}\label{averif2}
e^{-\delta\tau\varphi_b(\tau/\sigma_{j+1})} (2C_2C_4\varepsilon_0/\alpha) e^{-r\varphi_b(r/u_j)} \leq e^{-32^b\delta\tau\varphi_b(\tau/u_{j})} e^{-r\varphi_b(r/u_j)} \leq  e^{-\rho\varphi_b(r/u_j)}\leq 1
\end{equation}
where we used the inequality in~\eqref{u_0}, the fact that $\sigma_{j+1}=u_{j+1}/16=u_j/32$ and the definition of $\rho=r-32^b\delta\tau$. Again,~\eqref{averif2} holds true since it holds true for $j=-1$ in view of~\eqref{averif}. Hence Proposition~\ref{KAM2} applies (with $\Theta=\Phi^j$) and gives a vector $\lambda_{j+1}$ and a transformation $\Phi_{j+1}$ with the estimates 
\begin{equation}\label{estim}
|\lambda_{j+1}| \leq \alpha e^{-r\varphi_b(r/u_j)}, \quad \max\{|\Phi_{j+1}-\mathrm{Id}|_{s_{j+2}},|D\Phi_{j+1}-\mathrm{Id}|_{s_{j+2}}\} \leq e^{-\rho\varphi_b(r/u_j)}
\end{equation}
so that
\[ (\Phi_{j+1})^*((\Phi^j)^*(X_{j+1}-\lambda^j)-(\Phi^j)^*\lambda_{j+1})=X_\omega. \]
Hence if we set $\Phi^{j+1}=\Phi^j\circ \Phi_{j+1}$, $\lambda^{j+1}=\lambda^j+\lambda_{j+1}$ then
\begin{equation*}
(\Phi^{j+1})^*(X_{j+1}-\lambda^{j+1})=X_\omega
\end{equation*}
and the estimates~\eqref{iter}, with $j$ replaced by $j+1$, follows from~\eqref{estim} and~\eqref{dist}. Finally, since~\eqref{iter} holds true with $j$ replaced by $j+1$, then~\eqref{dist} as well holds true with $j$ replaced by $j+1$, and since the transformations are real, for $\theta \in \T^n_{s_{j+2}}$ we have
\[ |\mathrm{Im}(\Phi^{j+1}(\theta))|\leq |D\Phi^{j+1}|_{s_{j+2}}|\mathrm{Im}(\theta)| \leq 2s_{j+2}=u_{j+2} \] 
and therefore $\Phi^{j+1}$ maps $\T^n_{s_{j+2}}$ into $\T^n_{u_{j+2}}$, hence the claim is proved.
\end{proof}

To conclude the proof of Theorem~\ref{th1}, we define $w_{j-1}=s_{j+1}=u_{j-1}/8$ for $j \in \N$ so that~\eqref{iter} gives
\[ \max\{|\Phi^j-\Phi^{j-1}|_{w_{j-1}},|D\Phi^j-D\Phi^{j-1}|_{w_{j-1}}\} \leq e^{-\upsilon\varphi_b(r/(w_{j-1}))}, \quad \upsilon:=8^{-b}\rho.  \]
Since $w_{-1}=u_0/4<r$, if we fix $0<\nu=\upsilon/2<\upsilon$, the assumptions of Proposition~\ref{popov2} are satisfied because~\eqref{seuilconv} is yet another smallness condition, so we can apply this proposition to each component of $\Phi^{j-1}-\mathrm{Id}$ and $D\Phi^{j-1}-\mathrm{Id}$ and consequently $\Phi^{j-1}$ converges to some map $\Phi \in \mathcal{F}^a_\nu$, which is necessarily a diffeomorphism in view of the convergence of  $D\Phi^{j-1}$, and satisfy the estimate
\begin{equation}\label{es1}
||\Phi-\mathrm{Id}||_{\nu}\leq 2e^{-(\upsilon-\nu)\varphi_b(r/w_{-1})}=2e^{-8^{-b}(\upsilon-\nu)\varphi_b(r/2u_0)}=2(C_2C_4\varepsilon_0/\alpha)^{\iota}
\end{equation}
with $\iota=8^{-b}(2r)^{-1}\upsilon$, where we used the definition in~\eqref{u_0}, whereas for the $\lambda^{j-1}$, it follows directly from~\eqref{iter} that it converges to some $\lambda \in \R$ which satisfy
\begin{equation}\label{es2}
|\lambda| \leq 2\alpha e^{-r\varphi_b(r/u_{-1})}=2\alpha e^{-r\varphi_b(r/(2u_{0}))}=2C_2C_4\varepsilon_0.
\end{equation}
Since $F_j$ converges uniformly to $F$ by~\eqref{lissage}, $X_j$ converges uniformly to $X=X_\omega+F$ and going to the limit $j \rightarrow +\infty$ in~\eqref{limit} we find
\[ \Phi^*(X-\lambda)=X_\omega\]
and the wanted estimates follows from~\eqref{es1} and~\eqref{es2}.

\bigskip

\textit{Acknowledgements.} The author have benefited from partial funding from the ANR project Beyond KAM.

\addcontentsline{toc}{section}{References}
\bibliographystyle{amsalpha}
\bibliography{NonQA}

\end{document}